\def\Q{{\mathbb{Q}}}
\def\C{{\mathbb{C}}}
\theoremstyle{plain}
\newtheorem{theorem}{Theorem}[section]
\newtheorem{corollary}[theorem]{Corollary}
\newtheorem{lemma}[theorem]{Lemma}
\newtheorem{proposition}[theorem]{Proposition}
\theoremstyle{definition}
\newtheorem{definition}[theorem]{Definition}
\newtheorem{example}[theorem]{Example}
\newtheorem{rem}[theorem]{Remark}
\def\ps@pprintTitle{%
  \let\@oddhead\@empty
  \let\@evenhead\@empty
  \let\@oddfoot\@empty
  \let\@evenfoot\@oddfoot
}
\title{Counting polarizations on abelian varieties with group action}
\author{Robert Auffarth, Angel Carocca, Rub\'{\i} E. Rodr\'{\i}guez}
\address{R. Auffarth \\Departamento de Matem\'aticas, Facultad de
Ciencias, Universidad de Chile, Santiago\\Chile}
\email{rfauffar@uchile.cl}
\address{A. Carocca \\Departamento de Matemática y Estadística, Universidad de La Frontera. Temuco, Chile}
\email{angel.carocca@ufrontera.cl}
\address{R. E. Rodr\'{\i}guez\\ Departamento de Matemática y Estadística, Universidad de La Frontera. Temuco, Chile}
\email{rubi.rodriguez@ufrontera.cl}
\begin{document}

\begin{abstract}
Let $\mathcal{A}_g$ be the moduli space of principally polarized abelian varieties. We study the problem of counting the number of principal polarizations modulo the natural action of the automorphism group of the abelian variety on a very general element of a positive dimensional component of $\mathrm{Sing}(\mathcal{A}_g)$, and show that this number is not always 1.
\end{abstract}

\maketitle

\section{Introduction}

Given a complex principally polarized abelian variety $(A,\mathcal{L})$ of dimension $g$, it is natural to ask how many principal polarizations $A$ can have, modulo the action of the automorphism group of $A$ by pullback. This is equivalent to asking how many times the variety $A$ appears in the moduli space $\mathcal{A}_g$ of principally polarized abelian varieties of dimension $g$. We denote this number by $\pi(A)$. A classical result by Narasimhan and Nori states that $\pi(A)$ is finite, but it can be shown, for example, that it is neither a semi-continuous nor bounded function on $\mathcal{A}_g$ for $g\geq 2$ (see \cite[Theorem 5.1]{Lange2}, for instance). Another interesting property of $\pi(A)$ is that, as one might expect, it can be quite large on products of elliptic curves. For example, if $E$ is an elliptic curve without complex multiplication, then $\pi(E^g)=1$ if $g\leq 7$, but $\pi(E^{23})=117$ and $\pi(E^{32})\geq80000000$ (see \cite{Lange2})!

Many interesting examples have also been constructed in the context of the Torelli Theorem for smooth projective curves. Indeed, it is natural to ask if, given a smooth projective curve $C$, its (non-polarized) Jacobian variety $JC$ uniquely characterizes $C$. The answer is a resounding no, with interesting examples already appearing in genus 2 and 3 (see \cite{Howe} and \cite{Abdellatif}). Even in genus 3, Howe \cite{Howe} shows that the non-polarized Jacobian variety doesn't even determine if the curve is hyperelliptic or not. This shows that Jacobian varieties can have many non-isomorphic principal polarizations.

One of the first in-depth studies of the number $\pi(A)$ was done by Lange in \cite{Lange}; we will briefly recall his main result. If $A$ is an abelian variety, let $\mathrm{Aut}_+^s(A)$ denote the set of automorphisms of $A$ that are fixed by the Rosati involution $\dagger$ and whose minimal polynomial has strictly positive roots. Lange proved the following:

\begin{theorem}
If $A$ has a principal polarization, then
\[\pi(A)=\#(\mathrm{Aut}_+^s(A)/\mathrm{Aut}(A)),\] 
where $\mathrm{Aut}(A)$ acts on $\mathrm{Aut}_+^s(A)$ by $\mu:\sigma\mapsto \mu^\dagger\sigma \mu$.\footnote{Note that $\mathrm{Aut}_+^s(A)$ is not usually a group, and  $\mathrm{Aut}_+^s(A)/\mathrm{Aut}(A)$ denotes the quotient of  the \textit{set} $\mathrm{Aut}_+^s(A)$ by the action of $\mathrm{Aut}(A)$.}
\end{theorem}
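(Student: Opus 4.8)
The plan is to produce an explicit $\mathrm{Aut}(A)$-equivariant bijection between the set of principal polarizations on $A$ and the set $\mathrm{Aut}_+^s(A)$, and then pass to quotients. First I would fix the given principal polarization $\mathcal{L}_0$ once and for all, together with the isomorphism $\phi_0=\phi_{\mathcal{L}_0}\colon A\xrightarrow{\ \sim\ }\widehat{A}$ that it induces, and use $\phi_0$ to normalize the Rosati involution $\dagger$ on $\mathrm{End}(A)$ by $\sigma^\dagger=\phi_0^{-1}\,\widehat{\sigma}\,\phi_0$. Recall that a polarization on $A$ is precisely an ample class in $\mathrm{NS}(A)$, that the class of a line bundle $L$ is encoded by the symmetric homomorphism $\phi_L\colon A\to\widehat{A}$, and that $\mathrm{NS}(A)$ is thereby identified with the group of symmetric homomorphisms $A\to\widehat{A}$. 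Composing with $\phi_0^{-1}$ turns this into the familiar group isomorphism $\mathrm{NS}(A)\xrightarrow{\ \sim\ }\{\sigma\in\mathrm{End}(A):\sigma^\dagger=\sigma\}$, $L\mapsto\sigma_L:=\phi_0^{-1}\phi_L$; well-definedness into $\dagger$-symmetric endomorphisms and surjectivity both follow immediately from $\widehat{\phi_0}=\phi_0$ and the definition of $\dagger$.

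Next I would identify which $\sigma_L$ correspond to principal polarizations. Because $\phi_0$ is an isomorphism, $\phi_L$ is an isomorphism — that is, $L$ is a \emph{principal} polarization — if and only if $\sigma_L\in\mathrm{Aut}(A)$. It remains to match ampleness of $L$ with positivity of the roots of the minimal polynomial of $\sigma_L$. For this I would work analytically, writing $A=V/\Lambda$: there $\dagger$ is the adjoint for the positive-definite Hermitian form $H_0$ attached to $\mathcal{L}_0$, the Hermitian form attached to $L$ is (after the standard identifications) $H_0(\sigma_L\,\cdot\,,\,\cdot\,)$, and $L$ is ample exactly when this form is positive definite. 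A $\dagger$-symmetric $\sigma_L$ is $H_0$-self-adjoint, hence diagonalizable with real eigenvalues, and positive-definiteness of $H_0(\sigma_L\,\cdot\,,\,\cdot\,)$ amounts to all those eigenvalues — equivalently, all roots of the minimal polynomial of $\sigma_L$ — being strictly positive. This is the step I expect to be the main point, as it is where the positivity of the Rosati involution is genuinely used; everything else is formal. The outcome is a bijection $\{\text{principal polarizations on }A\}\longleftrightarrow\mathrm{Aut}_+^s(A)$, $L\mapsto\sigma_L$.

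Finally I would compute the action. For $\mu\in\mathrm{Aut}(A)$ one has the standard identity $\phi_{\mu^*\mathcal{L}}=\widehat{\mu}\circ\phi_{\mathcal{L}}\circ\mu$, and hence
\[
\sigma_{\mu^*L}\;=\;\phi_0^{-1}\,\widehat{\mu}\,\phi_L\,\mu\;=\;\bigl(\phi_0^{-1}\widehat{\mu}\,\phi_0\bigr)\bigl(\phi_0^{-1}\phi_L\bigr)\mu\;=\;\mu^\dagger\,\sigma_L\,\mu .
\]
Thus the bijection above intertwines the pullback action of $\mathrm{Aut}(A)$ on principal polarizations with the action $\mu\colon\sigma\mapsto\mu^\dagger\sigma\mu$ on $\mathrm{Aut}_+^s(A)$; in particular pullback preserves principal polarizations and the right-hand action preserves $\mathrm{Aut}_+^s(A)$. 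Since $(A,\mathcal{L})$ and $(A,\mathcal{L}')$ are isomorphic as polarized abelian varieties exactly when $\mathcal{L}$ and $\mathcal{L}'$ lie in a single $\mathrm{Aut}(A)$-orbit, $\pi(A)$ is by definition the number of such orbits, so the equivariant bijection descends to a bijection of orbit sets and yields $\pi(A)=\#(\mathrm{Aut}_+^s(A)/\mathrm{Aut}(A))$. Finiteness of this set is exactly the Narasimhan--Nori theorem and need not be reproved here.
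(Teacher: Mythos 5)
Your argument is correct and is essentially the approach the paper takes: the statement itself is quoted from Lange without proof, but the paper's generalization (Theorem \ref{counttheorem}) is proved by exactly the correspondence you construct --- polarizations correspond to totally positive Rosati-symmetric endomorphisms via a fixed polarization, principal ones to automorphisms, and pullback translates into $\mu:\sigma\mapsto\mu^\dagger\sigma\mu$. The only cosmetic differences are that you re-derive the ample/totally-positive equivalence analytically where the paper cites \cite[Remark 5.2.5]{BL}, and that the paper passes to the dual abelian variety (needed for non-principal types), which is unnecessary in your setting since $\phi_{\mathcal{L}_0}$ is an isomorphism.
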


This shows in general terms that there is an intimate relation between principal polarizations on an abelian variety and the existence of certain automorphisms on the abelian variety. In this line of reasoning, recall that for $g\geq 3$, $\mathrm{Sing}(\mathcal{A}_g)$ is the locus of principally polarized abelian varieties that have an automorphism different from $\pm1$ that preserves the polarization. It is then natural to ask how many principal polarizations an element of $\mathrm{Sing}(\mathcal{A}_g)$ can have. This question, in this generality, is too difficult to give any reasonable answer, and so in this article we concentrate on the following question:\\

\noindent\textbf{Question:} Let $Z\subseteq\text{Sing}(\mathcal{A}_g)$ be a positive-dimensional component. What is $\pi(A)$ for a very general\footnote{Here we note that the term \textit{very general} is somewhat vague, and we will make this more precise in the sections that follow (see Corollary \ref{verygeneral} and its proof for precise details).} element $(A,\mathcal{L})\in Z$?\\

In order to attack this problem, it soon becomes clear that it is necessary to not only know how to count the number of principal polarizations on an abelian variety, but also the number of polarizations of any given (non-principal) type. To this end, our first result generalizes Lange's result on the number of principal polarizations on an abelian variety to the number of polarizations of arbitrary type. If $D=(d_1,\ldots,d_g)$ with $d_i\in\mathbb{Z}_{>0}$ and $d_i\mid d_{i+1}$ for $i\leq g-1$, then define
\[\pi_D(A):=\#\{\text{polarizations of type }D\text{ on }A\}/\mathrm{Aut}(A),\]
where $\mathrm{Aut}(A)$ acts by pullback. If $\alpha=[\mathcal{L}]$ for $\mathcal{L}\in\mathrm{Pic}(A)$ is a polarization, as usual we define 
\[K(\alpha):=\{x\in X:t_x^*\mathcal{L}\simeq\mathcal{L}\}.\]
Now if $\mathcal{H}_D$ denotes the set of all finite subgroups $K$ of $A$ such that there exists a polarization $\alpha$ of type $D$ with $K(\alpha)=K$, and
\[G_K:=\{\sigma\in\mathrm{Aut}(A):\sigma(K)=K\},\]
then we obtain our first main result:\\

\begin{theorem}
If $\mathrm{Aut}_+^s(A^\vee)$ denotes the set of all totally positive automorphisms on the dual abelian variety fixed by the Rosati involution given by the dual polarization, then
\[\pi_D(A)=\sum_{[K]\in\mathcal{H}_D/\mathrm{Aut}(A)}\#(\mathrm{Aut}_+^s(A^\vee)/G_K),\]
where $G_K$ acts on $\mathrm{Aut}_+^s(A^\vee)$ by  $\mu:\eta\mapsto\mu^\vee\eta(\mu^\dagger)^\vee$.
\end{theorem}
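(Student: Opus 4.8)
The plan is to follow the template of Lange's theorem recalled above, with two modifications: first stratify the set of polarizations of type $D$ according to the finite subgroup $K(\alpha)\in\mathcal{H}_D$, and then, on each stratum, pass to the dual abelian variety so that the ``ratio'' of two polarizations — which over $\Q$ would only be an isogeny, since $\phi_\alpha$ is no longer invertible when $\alpha$ is not principal — becomes an honest element of $\mathrm{Aut}(A^\vee)$. For Step 1, let $S$ be the set of polarizations of type $D$ on $A$. Pullback gives an action of $\mathrm{Aut}(A)$ on $S$, and $\alpha\mapsto K(\alpha)$ is equivariant for the induced action $\mu\cdot K:=\mu^{-1}(K)$ on $\mathcal{H}_D$, whose stabilizer at $K$ is precisely $G_K$. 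Every $K\in\mathcal{H}_D$ has exponent dividing $d_g$, hence $K\subseteq A[d_g]$, so $\mathcal{H}_D$ is finite. The elementary ``orbits fibered over orbits'' principle then yields
\[
\pi_D(A)=\#\big(S/\mathrm{Aut}(A)\big)=\sum_{[K]\in\mathcal{H}_D/\mathrm{Aut}(A)}\#\big(P_K/G_K\big),\qquad P_K:=\{\alpha\in S:K(\alpha)=K\}.
\]

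For Step 2, fix $K\in\mathcal{H}_D$ and a reference $\alpha_K\in P_K$; set $\phi_0:=\phi_{\alpha_K}\colon A\to A^\vee$, an isogeny with kernel $K$, and let $\dagger$ be the Rosati involution on $\mathrm{End}_\Q(A^\vee)$ attached to the dual polarization $\alpha_K^\vee$, so that $\eta^\dagger=\phi_0\,\eta^\vee\,\phi_0^{-1}$ in $\mathrm{End}_\Q(A^\vee)$ (because $\phi_{\alpha_K^\vee}$ is a rational multiple of $\phi_0^{-1}$). For $\alpha\in P_K$, both $\phi_\alpha$ and $\phi_0$ are isogenies $A\to A^\vee$ with kernel \emph{exactly} $K$, so each factors through an isomorphism $A/K\xrightarrow{\ \sim\ }A^\vee$; consequently $\eta_\alpha:=\phi_\alpha\phi_0^{-1}$ is a genuine element of $\mathrm{Aut}(A^\vee)$, with $\phi_\alpha=\eta_\alpha\phi_0$. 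A direct computation shows that $\phi_\alpha$ is symmetric $\iff\eta_\alpha=\eta_\alpha^\dagger$, and, granting symmetry, that $\phi_\alpha$ is a polarization $\iff\phi_0^{-1}\phi_\alpha=\phi_0^{-1}\eta_\alpha\phi_0$ is totally positive $\iff\eta_\alpha$ is totally positive — this is exactly the classical criterion relating polarizations to totally positive symmetric endomorphisms applied with reference polarization $\alpha_K$ (as in \cite{Lange}), transported through $\phi_0$. Conversely, for any $\eta\in\mathrm{Aut}_+^s(A^\vee)$ the isogeny $\eta\phi_0$ has kernel $K$ and is symmetric and positive, hence equals $\phi_\beta$ for a unique $\beta\in S$; since $K(\beta)=K\cong K(\alpha_K)$, the polarization $\beta$ again has type $D$, i.e.\ $\beta\in P_K$. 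Thus $\alpha\mapsto\eta_\alpha$ is a bijection $P_K\xrightarrow{\ \sim\ }\mathrm{Aut}_+^s(A^\vee)$.

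For Step 3, let $\mu\in G_K$. Then $\mu^*\alpha\in P_K$ and $\phi_{\mu^*\alpha}=\mu^\vee\phi_\alpha\mu$; moreover $\mu(K)=K$ makes $(\mu^\dagger)^\vee=\phi_0\mu\phi_0^{-1}$ an \emph{integral} automorphism of $A^\vee$ (the same ``equal kernel'' argument applied to $\phi_0\mu$ and $\phi_0$). Hence
\[
\eta_{\mu^*\alpha}=\mu^\vee\phi_\alpha\mu\,\phi_0^{-1}=\mu^\vee\big(\phi_\alpha\phi_0^{-1}\big)\big(\phi_0\mu\phi_0^{-1}\big)=\mu^\vee\,\eta_\alpha\,(\mu^\dagger)^\vee,
\]
so the bijection of Step 2 intertwines the pullback $G_K$-action on $P_K$ with the action $\mu:\eta\mapsto\mu^\vee\eta(\mu^\dagger)^\vee$ on $\mathrm{Aut}_+^s(A^\vee)$; in particular this action does preserve $\mathrm{Aut}_+^s(A^\vee)$. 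Therefore $\#(P_K/G_K)=\#(\mathrm{Aut}_+^s(A^\vee)/G_K)$, and substituting into Step 1 gives the asserted formula. (As a sanity check, when $D=(1,\dots,1)$ one has $\mathcal{H}_D=\{0\}$, $G_K=\mathrm{Aut}(A)$, and $\phi_0$ identifies $A^\vee$ with $A$ carrying $\dagger$ to the Rosati involution of $\alpha_K$ and $\mu:\eta\mapsto\mu^\vee\eta(\mu^\dagger)^\vee$ to $\mu:\sigma\mapsto\mu^\dagger\sigma\mu$, recovering Lange's theorem.)

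The step I expect to be delicate is the well-posedness built into Step 2: both $\mathrm{Aut}_+^s(A^\vee)$ and the Rosati involution $\dagger$ depend on the chosen reference $\alpha_K\in P_K$. Replacing $\alpha_K$ by another $\alpha_K'\in P_K$ conjugates $\dagger$ by $\eta_{\alpha_K'}\in\mathrm{Aut}(A^\vee)$ and composes the bijection with a translation, so $\#(\mathrm{Aut}_+^s(A^\vee)/G_K)$ is independent of the choice — this needs to be spelled out so that the right-hand side of the theorem is unambiguous. The second technical point is the faithful transport of the analytic positivity criterion through the non-invertible isogeny $\phi_0$, i.e.\ writing out the associated Hermitian forms and confirming that total positivity of $\eta_\alpha$ is equivalent to positive-definiteness of $\phi_\alpha$. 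Finiteness of the two sums is comparatively easy: $\mathcal{H}_D/\mathrm{Aut}(A)$ is finite because $\mathcal{H}_D$ is, and each $\#(\mathrm{Aut}_+^s(A^\vee)/G_K)$ is finite by the Narasimhan--Nori finiteness principle for polarizations of bounded type modulo automorphisms.
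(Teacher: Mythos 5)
Your proposal is correct and follows essentially the same route as the paper: stratify the type-$D$ polarizations by the kernel subgroup $K$ modulo $\mathrm{Aut}(A)$, then on each stratum transport $\beta\mapsto\varphi_{\alpha_0}^{-1}\varphi_\beta$ through $\varphi_{\alpha_0}$ to obtain the honest automorphism $\varphi_\beta\varphi_{\alpha_0}^{-1}$ of $A^\vee$, checking symmetry, total positivity, and that pullback by $\mu\in G_K$ becomes $\eta\mapsto\mu^\vee\eta(\mu^\dagger)^\vee$ — exactly the paper's Proposition on $\mathcal{P}_D(K)\hookrightarrow\mathcal{P}_D$ combined with Theorem \ref{counttheorem}. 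Your added remarks on independence of the reference polarization and on finiteness are sensible refinements of points the paper handles by fixing $\alpha_0$ in the statement, not a different method.
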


As an interesting and unexpected corollary, we obtain the following (see Corollary \ref{ppmin} for the proof):

\begin{corollary}
If $A$ has a polarization of type $D$, then $\pi(A)\leq\pi_D(A)$.
\end{corollary}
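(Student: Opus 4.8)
The plan is to read the inequality off the formula for $\pi_D(A)$, by discarding all but one term of the sum and then comparing two orbit counts. If $A$ carries no principal polarization then $\pi(A)=0$ and there is nothing to prove, so assume $A$ is principally polarized. Specializing the formula for $\pi_D$ to $D=(1,\dots,1)$: a principal polarization has trivial kernel, so $\mathcal{H}_{(1,\dots,1)}$ consists of the single class $[\{0\}]$, and since every automorphism of $A$ fixes $\{0\}$ we get $G_{\{0\}}=\mathrm{Aut}(A)$. Thus
\[
\pi(A)=\#\bigl(\mathrm{Aut}_+^s(A^\vee)/\mathrm{Aut}(A)\bigr),
\]
with $\mathrm{Aut}(A)$ acting on $\mathrm{Aut}_+^s(A^\vee)$ by $\mu:\eta\mapsto\mu^\vee\eta(\mu^\dagger)^\vee$. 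In particular $\mathrm{Aut}(A)$ genuinely acts on the set $\mathrm{Aut}_+^s(A^\vee)$ occurring in the formula, and for every finite subgroup $K\subseteq A$ the group $G_K\subseteq\mathrm{Aut}(A)$ acts on $\mathrm{Aut}_+^s(A^\vee)$ as the restriction of this action.

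Now suppose $A$ has a polarization of type $D$, so $\mathcal{H}_D\neq\emptyset$, and fix a class $[K_0]\in\mathcal{H}_D/\mathrm{Aut}(A)$. Each summand in the formula for $\pi_D(A)$ is a nonnegative cardinality, hence
\[
\pi_D(A)\ \geq\ \#\bigl(\mathrm{Aut}_+^s(A^\vee)/G_{K_0}\bigr).
\]
Since $G_{K_0}$ is a subgroup of $\mathrm{Aut}(A)$ acting by restriction, every $\mathrm{Aut}(A)$-orbit is a union of $G_{K_0}$-orbits, so the natural surjection $\mathrm{Aut}_+^s(A^\vee)/G_{K_0}\to\mathrm{Aut}_+^s(A^\vee)/\mathrm{Aut}(A)$ gives
\[
\#\bigl(\mathrm{Aut}_+^s(A^\vee)/G_{K_0}\bigr)\ \geq\ \#\bigl(\mathrm{Aut}_+^s(A^\vee)/\mathrm{Aut}(A)\bigr)=\pi(A).
\]
Chaining the two displays yields $\pi_D(A)\geq\pi(A)$.

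The step I expect to require the most care is purely bookkeeping: one must check that the object $\mathrm{Aut}_+^s(A^\vee)$ — defined through the Rosati involution attached to a polarization — is literally the same set, with the same $\mathrm{Aut}(A)$-action, in both uses of the formula above (the instance $D=(1,\dots,1)$ and the instance for the given $D$). If the Rosati involution on $A^\vee$ is taken to be the one dual to a fixed principal polarization of $A$, this is immediate. If it is instead taken dual to the type-$D$ polarization, then one notes that two such positive involutions on $\mathrm{End}(A^\vee)\otimes\mathbb{Q}$ differ by conjugation by a totally positive symmetric element; such a conjugation is a $\mathbb{Q}$-algebra automorphism of $\mathrm{End}(A^\vee)\otimes\mathbb{Q}$, it preserves minimal polynomials (hence total positivity) and intertwines the two involutions, so it induces an $\mathrm{Aut}$-equivariant bijection between the two versions of $\mathrm{Aut}_+^s(A^\vee)$, and the comparison of orbit counts is unaffected. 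Granting this identification, the rest of the argument is formal: keep a single summand, then pass from a group to a subgroup.
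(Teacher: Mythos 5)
Your skeleton (keep the single summand $[K_0]$, then compare a quotient by $G_{K_0}$ with a quotient by $\mathrm{Aut}(A)$) is the same as the paper's one-line proof of Corollary \ref{ppmin}, but the point you yourself flag as the delicate one is exactly where your argument does not close, and neither of your two ways around it works. First, you cannot take the Rosati involution in the type-$D$ instance of the formula to be the one dual to a principal polarization $\Xi_0$: the bijection of Theorem \ref{counttheorem} for the summand $[K_0]$ is proved only for a reference polarization $\alpha_0$ with $K(\alpha_0)=K_0$, and with the principal reference the set $\mathrm{Aut}_+^s(A^\vee)$ parametrizes principal polarizations rather than elements of $\mathcal{P}_D(K_0)$; so the ``immediate'' branch is not available, and everything rests on comparing the two versions of $\mathrm{Aut}_+^s(A^\vee)$.

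That comparison is where the genuine gap lies. Writing $\dagger_1$ and $\dagger_2$ for the Rosati involutions of $\alpha_0^\vee$ and $\Xi_0^\vee$, one has $x^{\dagger_2}=t^{-1}x^{\dagger_1}t$ with $t=\varphi_{\alpha_0^\vee}^{-1}\varphi_{\Xi_0^\vee}$ totally positive and $\dagger_1$-symmetric, but conjugation by $t$ does not intertwine the two involutions and does not send $\dagger_1$-symmetric elements to $\dagger_2$-symmetric ones: for $x^{\dagger_1}=x$ one computes $(t^{-1}xt)^{\dagger_2}=t^{-1}\bigl(txt^{-1}\bigr)t=x$, not $t^{-1}xt$. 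To intertwine the involutions by an inner automorphism one would need a square root of $t$ inside $\mathrm{End}_{\mathbb{Q}}(A^\vee)$, which need not exist, and even then an inner automorphism of the rational algebra need not preserve the order $\mathrm{End}(A^\vee)$, hence need not carry automorphisms to automorphisms. The correct comparison map is $x\mapsto t^{-1}x$, which is not an algebra map and does not preserve units; and indeed the two sets are intrinsically different, since $\mathrm{Aut}_+^s(A^\vee)$ taken with $\dagger_1$ is in bijection with the polarizations of $A$ having kernel exactly $K_0$, while taken with $\dagger_2$ it is in bijection with the principal polarizations of $A$ --- sets which in general have different cardinalities, so no $\mathrm{Aut}$-equivariant bijection of the kind you assert can exist. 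Consequently the inequality $\#\bigl(\mathrm{Aut}_+^s(A^\vee)/G_{K_0}\bigr)\geq\pi(A)$ is not established by your proposal; note that the paper's proof never changes the reference polarization midstream, but works throughout with the single set attached to $\alpha_0^\vee$ and compares its quotients by $G_{K_0}$ and by $\mathrm{Aut}(A)$, and it is precisely that comparison, not bookkeeping, that carries the content of the corollary.
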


Now let's return to our main motivation. By the main result of \cite{GMZ}, there exists a prime number $p$, a natural number $n\geq 3$ such that $p \nmid n$ and an element $\rho_n\in\mathrm{GL}(2g,\mathbb{Z}/n\mathbb{Z})$ such that if $Z$ is an irreducible component of $\mathrm{Sing}(\mathcal{A}_g)$, then
\[Z=\mathcal{A}_g(p,\rho_n):=\{(A,\mathcal{L})\in\mathcal{A}_g:\exists\sigma\in\mathrm{Aut}(A,\mathcal{L}),|\sigma|=p,\sigma|_{A[n]}\sim\rho_n\},\]
where $\sim$ denotes equivalence of representations (i.e. in this case, conjugation). In particular, for every $(A,\mathcal{L})\in Z$, $\mathbb{Z}[\zeta_p]\subseteq\mathrm{End}(A)$ where $\zeta_p$ is a primitive $p$th root of unity. Let $U_p^+$ denote the subgroup of $U_p:=\mathbb{Z}[\zeta_p+\zeta_p^{-1}]^\times$ that consists of totally positive units; note that $U_p^2\subseteq U_p^+$. Define
\[\mathfrak{u}(p):=|U_p^+/U_p^2|,\] 
which is just the quotient between the narrow class number and the class number of the field $\mathbb{Q}(\zeta_p+\zeta_p^{-1})$.

Again following \cite{GMZ}, to $Z$ we can associate the list $n_0,n_1,\ldots,n_{p-1}$, where if $(A,\mathcal{L})\in Z$ is general and has an automorphism $\sigma$ of order $p$, then $n_i$ is the dimension of the eigenspace for $d\sigma$ associated to the eigenvalue $\zeta_p^i$ on $T_0(A)$. Using local deformation theory, it is shown that
\[\dim Z=\frac{n_0(n_0+1)}{2}+\sum_{i=1}^{(p-1)/2}n_in_{p-i}.\]
Our main theorem on the number of principal polarizations a general element of a component can have states the following:

\begin{theorem}\label{main}
Let $Z=\mathcal{A}_g(p,\rho_n)\subseteq\mathrm{Sing}(\mathcal{A}_g)$ be a positive-dimensional irreducible component of the singular locus of $\mathcal{A}_g$.  
\begin{enumerate}
\item If $p=2$, then $\pi(A)=1$ for a very general member of $Z$. 
\item If $p>2$, $\sum_{i=1}^{(p-1)/2}n_in_{p-i}>0$ and $n_i\neq 1$ for some $i$, then for a very general element $(A,\mathcal{L})\in Z$, $\pi(A)=\mathfrak{u}(p)$.
\end{enumerate}
Moreover, if $(A,\mathcal{L})\in Z$ is very general, then for every principal polarization $\mathcal{L}'$ on $A$, $(A,\mathcal{L}')\in Z$.
\end{theorem}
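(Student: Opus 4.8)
The plan is to deduce all three parts of Theorem~\ref{main} from Lange's formula $\pi(A)=\#(\mathrm{Aut}_+^s(A)/\mathrm{Aut}(A))$ (the first Theorem above), once the endomorphism ring of a very general member of $Z$ is pinned down.

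\emph{Step 1: endomorphisms of a very general member.} Fix a very general $(A,\mathcal{L})\in Z$ and $\sigma\in\mathrm{Aut}(A,\mathcal{L})$ of order $p$. The idempotents of $\mathbb{Q}[\sigma]$ split $A$ up to isogeny as $A\sim A_0\times A_1$, with $A_0$ the fixed abelian subvariety (of dimension $n_0$) and $\sigma$ acting on $A_1$ through $1+x+\cdots+x^{p-1}$, so that $\mathbb{Q}[\sigma|_{A_1}]\cong\mathbb{Q}(\zeta_p)$ and, $\mathbb{Z}[\zeta_p]$ being the maximal order, $\mathbb{Z}[\zeta_p]\subseteq\mathrm{End}(A_1)$. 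Using the unobstructed deformation theory of $(A,\sigma)$ in $Z$ recalled above from \cite{GMZ}, together with the fact that the locus of $Z$ over which the endomorphism ring jumps is a countable union of proper closed subvarieties, I would show that for very general $(A,\mathcal{L})$ one has $\mathrm{End}(A_0)=\mathbb{Z}$, $\mathrm{Hom}(A_0,A_1)=0$, and --- under the hypotheses of part (2) --- $A_1$ simple with $\mathrm{End}^{0}(A_1)=\mathbb{Q}(\zeta_p)$, hence $\mathrm{End}(A_1)=\mathbb{Z}[\zeta_p]$. Thus $\mathrm{End}(A)=\mathbb{Z}\times\mathbb{Z}[\zeta_p]$ and $\mathrm{Aut}(A)=\{\pm1\}\times\mathbb{Z}[\zeta_p]^\times$, with $\sigma$ corresponding to $(1,\zeta_p)$; in particular $\mathrm{End}^{0}(A)$ is commutative and $\sigma$ is central in it. For $p=2$ the same analysis gives $A\sim A_0\times A_1$ with $\sigma|_{A_1}=-1$, $\mathrm{End}^{0}(A)=\mathbb{Q}\times\mathbb{Q}$ and $\mathrm{Aut}(A)=\{\pm1,\pm\sigma\}$.

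\emph{Step 2: counting principal polarizations.} Being a positive involution of the CM field $\mathbb{Q}(\zeta_p)$, the Rosati involution $\dagger$ of $\mathcal{L}$ restricts to complex conjugation $\zeta_p\mapsto\zeta_p^{-1}$ on $\mathbb{Z}[\zeta_p]$, and is trivial on the $\mathbb{Z}$-factor. Hence $(\varepsilon,u)$ with $\varepsilon=\pm1$, $u\in\mathbb{Z}[\zeta_p]^\times$ lies in $\mathrm{Aut}_+^s(A)$ iff $\varepsilon=1$, $\bar u=u$ and $u$ is totally positive, i.e. iff it equals $(1,u)$ with $u\in U_p^+$. In Lange's formula the action of $\mu=(\varepsilon,v)$ sends $(1,u)$ to $\mu^\dagger(1,u)\mu=(\varepsilon^2,\bar v u v)=(1,N(v)u)$ with $N(v)=v\bar v$, so $\pi(A)=\#\bigl(U_p^+/N(\mathbb{Z}[\zeta_p]^\times)\bigr)$. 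Now $U_p^2=N(U_p)\subseteq N(\mathbb{Z}[\zeta_p]^\times)$; conversely, since for $p$ prime every unit of $\mathbb{Z}[\zeta_p]$ is a root of unity times a real unit (so $\mathbb{Z}[\zeta_p]^\times=\langle\zeta_p\rangle\cdot U_p$) while $N(\zeta_p)=1$, we get $N(\mathbb{Z}[\zeta_p]^\times)=U_p^2$. Therefore $\pi(A)=\#(U_p^+/U_p^2)=\mathfrak{u}(p)$, which is part (2) --- and the same identity shows the $\mathfrak{u}(p)$ polarizations attached to totally positive units are pairwise inequivalent, so no separate lower bound is needed. For $p=2$, the elements $-1$, $\sigma$ and $-\sigma$ each have a negative root in their minimal polynomial, so $\mathrm{Aut}_+^s(A)=\{1\}$ and $\pi(A)=1$, which is part (1).

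\emph{Step 3: every principal polarization lies in $Z$, and the main obstacle.} Let $\mathcal{L}'$ be any principal polarization on $A$ and put $h:=\phi_\mathcal{L}^{-1}\phi_{\mathcal{L}'}\in\mathrm{Aut}(A)$; by Lange's dictionary $h\in\mathrm{Aut}_+^s(A)$. The Rosati involution of $\mathcal{L}'$ is $x\mapsto h^{-1}x^{\dagger}h$, so $\sigma$ preserves $\mathcal{L}'$ iff $h^{-1}\sigma^{\dagger}h\,\sigma=1$; since $\sigma^{\dagger}=\sigma^{-1}$ ($\sigma$ preserves $\mathcal{L}$), this says precisely that $h$ commutes with $\sigma$. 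By Step 1 $\sigma$ is central in $\mathrm{End}^{0}(A)$, so this is automatic, and since $|\sigma|=p$ while $\sigma|_{A[n]}\sim\rho_n$ does not depend on the polarization, $(A,\mathcal{L}')\in\mathcal{A}_g(p,\rho_n)=Z$. For the final ``moreover'' assertion (stated for any positive-dimensional $Z$) one only needs $\sigma$ central for a very general member: $\sigma$ is the identity on $A_0$, and on the $A_1$-factor it generates $\mathbb{Q}(\zeta_p)=\mathbb{Q}[\sigma|_{A_1}]$, which lies in the centre of $\mathrm{End}^{0}(A_1)$ for a very general $A_1$ with the prescribed $\mathbb{Q}(\zeta_p)$-action (when $A_1$ is rigid this is forced, as $[\mathbb{Q}(\zeta_p):\mathbb{Q}]=2\dim A_1$ gives $\mathrm{End}^{0}(A_1)=\mathbb{Q}(\zeta_p)$). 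The essential difficulty is concentrated in Step 1: making ``very general'' precise and proving that under the numerical hypotheses $A_1$ is simple with endomorphism algebra exactly $\mathbb{Q}(\zeta_p)$, so that $\mathrm{Aut}(A)$ is no bigger than $\{\pm1\}\times\mathbb{Z}[\zeta_p]^\times$; once this is known, Steps 2 and 3 are formal.
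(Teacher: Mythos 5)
Your Step 2 and Step 3 would be fine if Step 1 were true as stated, but Step 1 contains a genuine error: from $A\sim A_0\times A_1$ (isogeny!) and $\mathrm{End}(A_0)=\mathbb{Z}$, $\mathrm{End}(A_1)=\mathbb{Z}[\zeta_p]$, $\mathrm{Hom}(A_0,A_1)=0$ you cannot conclude $\mathrm{End}(A)=\mathbb{Z}\times\mathbb{Z}[\zeta_p]$, hence neither $\mathrm{Aut}(A)=\{\pm1\}\times\mathbb{Z}[\zeta_p]^\times$ nor $\mathrm{Aut}_+^s(A)\cong U_p^+$. The addition map $X\times Y\to A$ has kernel $X\cap Y\cong K(\Theta|_Y)$, whose order is an even power of $p$ and which is nontrivial in general (in the paper's genus-$4$ example it is $(\mathbb{Z}/3\mathbb{Z})^4$); a pair $(m,u)\in\mathbb{Z}\times\mathbb{Z}[\zeta_p]$ descends to an endomorphism of $A$ only if $m$ and $u$ induce the same map on $X\cap Y$, i.e.\ only under a congruence condition modulo $(1-\zeta_p)$ (note $\sigma$ acts trivially on $X\cap Y$). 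So $\mathrm{End}(A)$ is in general a proper suborder, and in Lange's formula both the set $\mathrm{Aut}_+^s(A)$ and the acting group $\mathrm{Aut}(A)$ are cut down by these congruences; your identification $\pi(A)=\#\bigl(U_p^+/N(\mathbb{Z}[\zeta_p]^\times)\bigr)=\#(U_p^+/U_p^2)$ does not follow, and showing that the congruence-constrained quotient still has exactly $\mathfrak{u}(p)$ elements is a nontrivial statement about unit congruences that you never address. (Your $p=2$ case survives, since there the only totally positive symmetric unit of $\mathrm{End}_{\mathbb{Q}}(A)=\mathbb{Q}\times\mathbb{Q}$ is $1$, so no lower bound is needed.)

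This is precisely the obstacle the paper is organized around: instead of applying Lange's theorem to $A$ itself, it proves the generalized count for polarizations of arbitrary type with prescribed kernel (Theorem \ref{counttheorem}), shows for very general $(A,\Theta)$ that any principal polarization $\Xi$ satisfies $\Xi|_X\equiv\Theta|_X$ and $K(\Xi|_Y)=X\cap Y$, and thereby identifies $\mathcal{P}$ with $\mathrm{Aut}_+^s(Y^\vee)/G_{K(\Theta|_Y)}$, where the full group $\mathrm{Aut}(Y)=\mathbb{Z}[\zeta_p]^\times$ genuinely acts and the quotient is honestly $U_p^+/U_p^2$. Separately, your Step 1 determination of the generic endomorphism algebra is only asserted: the ``countable union of proper closed subvarieties'' remark does not by itself show the very general member has no extra endomorphisms; the paper gets this from the moduli spaces $\mathcal{A}(\mathcal{M},T)$ of abelian varieties with $\mathbb{Q}(\zeta_p)$-structure and \cite[Theorem 9.9.1]{BL}, and this is exactly where the hypotheses $\sum_i n_in_{p-i}>0$ and $n_i\neq1$ for some $i$ are used (in the excluded cases the generic algebra really is larger, e.g.\ quaternionic). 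As written, your argument establishes part (1) and the ``moreover'' statement modulo Step 1, but not part (2).
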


The numerical conditions imposed in the theorem are not particularly restrictive; however, we do not know what happens if these are not satisfied, since we lose control over the endomorphism algebra of a very general member of the component. 

Recently Dummit and Kisilevsky \cite[Theorem 7]{DK} proved that $\mathfrak{u}(n)$ can be arbitrarily large for (highly composite) $n\in\mathbb{N}$ (the definition of $\mathfrak{u}(n)$ being the same for non-primes), but it is an open question whether or not there are even infinitely many primes $p$ where $\mathfrak{u}(p)\neq1$. It can be checked that for all primes $p$ between $2$ and $500$, $\mathfrak{u}(p)=1$ except for the primes found in the following table:\\

\begin{center}
\begin{tabular}{ c | c| c| c| c| c| c| c| c| c| c| c| c }
$p$&29&113&197&239&277&311&337&373&397&421&463&491\\\hline
$\mathfrak{u}(p)$&8&8&8&8&4&1024&64&32&4&16&8&8
\end{tabular}
\end{center}

\vspace{0.5cm}

In the last section, we use this information, along with a result by Zarhin \cite{Zarhin}, to describe examples where the very general element of a component of $\mathrm{Sing}(\mathcal{A}_g)$ has many principal polarizations.\\
 
\noindent\textit{Acknowledgements:} We would like to thank Eduardo Friedman for many helpful discussions and David Dummit and Hershy Kisilevsky for their interest in this work. The first author was partially supported by the ANID - Fondecyt grant 1220997, the second author was partially supported by the ANID - Fondecyt grant 1240181, and the third author was partially supported by the ANID - Fondecyt grant 1230708. 

\section{Counting polarizations of a given type on an abelian variety}

Let $(A,\alpha)$ be a complex polarized abelian variety of type $D:=(d_1,\ldots,d_g)$. For our first order of business, following Lange's idea in \cite{Lange}, we would like to count the set
\[\mathcal{P}_D:=\{\beta\in\mathrm{NS}(A):\beta\text{ is a polarization of type }D\}/\mathrm{Aut}(A),\]
where $\mathrm{Aut}(A)$ acts by pullback of numerical classes. For every polarization $\beta\in\mathrm{NS}(A)$ of type $D$ induced by an ample line bundle $\mathcal{L}\in\mathrm{Pic}(A)$, we have an isogeny
\[\varphi_\beta:A\to A^\vee=\mathrm{Pic}^0(A)\]
\[x\mapsto t_x^*\mathcal{L}\otimes\mathcal{L}^{-1}\]
whose kernel is 
\[K(\beta):=\ker(\varphi_\beta)\simeq\left(\mathbb{Z}/d_1\mathbb{Z}\times\cdots\times\mathbb{Z}/d_g\mathbb{Z}\right)^2.\] We will say that $K(\beta)$ is a \textit{subgroup of type} $D$. If $K\leq A$ is a subgroup of type $D$ of $A$, then define the following sets:
\[G_K:=\{\sigma\in\mathrm{Aut}(A):\sigma(K)=K\}\]
\[\mathcal{P}_D(K):=\{\beta\in\mathrm{NS}(A)\text{ ample of type }D, K(\beta)=K\}/G_K.\] 

\begin{definition} Define the numbers
\[\pi_D(A):=\#\mathcal{P}_D\]
\[\pi(A)=\pi_I(A):=\#\text{ of principal polarizations on }A.\]
\end{definition}

We can relate $\mathcal{P}_D$ and $\mathcal{P}_D(K)$ as follows:

\begin{proposition}
We have that $\mathcal{P}_D(K)$ can be identified with the set of all elements $[\beta]\in\mathcal{P}_D$ such that there exists an automorphism $\sigma\in\mathrm{Aut}(A)$ such that $K(\sigma^*\beta)=\sigma^{-1}(K(\beta))=K$. 
\end{proposition}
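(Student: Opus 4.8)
The plan is to exhibit an explicit bijection between $\mathcal{P}_D(K)$ and the claimed subset of $\mathcal{P}_D$, the key input being the transformation rule for the polarization isogeny under pullback by an automorphism. First I would record that for $\sigma\in\mathrm{Aut}(A)$ and a polarization $\beta$ represented by $\mathcal{L}$, one has $\varphi_{\sigma^*\beta}=\sigma^\vee\circ\varphi_\beta\circ\sigma$ (this follows from $t_x^*\sigma^*=\sigma^*t_{\sigma(x)}^*$ and the fact that $\sigma^*$ acts on $\mathrm{Pic}^0(A)=A^\vee$ as the dual isomorphism $\sigma^\vee$). Since $\sigma$ and $\sigma^\vee$ are isomorphisms, this yields
\[
K(\sigma^*\beta)=\ker\varphi_{\sigma^*\beta}=\sigma^{-1}\bigl(\ker\varphi_\beta\bigr)=\sigma^{-1}\bigl(K(\beta)\bigr),
\]
and in particular $K(\sigma^*\beta)\simeq K(\beta)$ as abstract groups, so $\sigma^*\beta$ is again of type $D$. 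This already shows that the equality $K(\sigma^*\beta)=\sigma^{-1}(K(\beta))$ in the statement is automatic, and that the substantive condition on $[\beta]$ is that this common subgroup be equal to $K$, i.e.\ that $K(\beta)$ lie in the $\mathrm{Aut}(A)$-orbit of $K$.

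Next I would define $\Phi\colon\mathcal{P}_D(K)\to\mathcal{P}_D$ by sending the $G_K$-orbit of a type-$D$ polarization $\beta$ with $K(\beta)=K$ to its $\mathrm{Aut}(A)$-orbit $[\beta]$. This is well defined because $G_K\subseteq\mathrm{Aut}(A)$, so representatives of the same $G_K$-orbit are automatically $\mathrm{Aut}(A)$-equivalent. For the image: if $\beta$ represents a class in $\mathcal{P}_D(K)$ then taking $\sigma=\mathrm{id}$ shows $[\beta]$ lies in the asserted subset; conversely, if $[\gamma]\in\mathcal{P}_D$ satisfies $\sigma^{-1}(K(\gamma))=K$ for some $\sigma\in\mathrm{Aut}(A)$, then $\beta:=\sigma^*\gamma$ is of type $D$ with $K(\beta)=\sigma^{-1}(K(\gamma))=K$ by the formula above, so $[\beta]_{G_K}\in\mathcal{P}_D(K)$ and $\Phi([\beta]_{G_K})=[\gamma]$. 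Hence the image of $\Phi$ is exactly the subset described in the statement. For injectivity, suppose $\beta,\beta'$ both satisfy $K(\beta)=K(\beta')=K$ and $\beta'=\sigma^*\beta$ for some $\sigma\in\mathrm{Aut}(A)$; then $K=K(\beta')=\sigma^{-1}(K(\beta))=\sigma^{-1}(K)$, so $\sigma(K)=K$, i.e.\ $\sigma\in G_K$, and therefore $\beta,\beta'$ represent the same class of $\mathcal{P}_D(K)$. Thus $\Phi$ is a bijection onto the claimed subset, which is the desired identification.

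I do not expect a serious obstacle here: the only point that needs genuine care is the pullback formula $\varphi_{\sigma^*\beta}=\sigma^\vee\circ\varphi_\beta\circ\sigma$ and the two consequences drawn from it — that type is preserved under pullback by automorphisms, and that $\sigma\in G_K$ is equivalent to $\sigma^{-1}(K)=K$ — after which the verification that $\Phi$ is well defined, surjective onto the stated subset, and injective is purely formal bookkeeping with the $G_K$- and $\mathrm{Aut}(A)$-actions.
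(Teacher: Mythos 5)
Your argument is correct and follows essentially the same route as the paper: the same map from $\mathcal{P}_D(K)$ to $\mathcal{P}_D$, the same identification of its image, and the same injectivity computation via $\varphi_{\sigma^*\beta}=\sigma^\vee\varphi_\beta\sigma$ forcing $\sigma\in G_K$. Your write-up just makes the pullback formula and the well-definedness check more explicit than the paper does.
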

\begin{proof}
There is clearly a map from $\mathcal{P}_D(K)$ to $\mathcal{P}_D$ whose image consists of all $[\beta]$ such that there exists an automorphism $\sigma\in\mathrm{Aut}(A)$ such that $K(\sigma^*\beta)=K$. Now if $\beta,\gamma\in\mathrm{NS}(A)$ are of type $D$ and $K(\beta)=K(\gamma)=K$, then their images in $\mathcal{P}_D$ are equal if and only if there exists $\sigma\in\mathrm{Aut}(A)$ such that $\beta=\sigma^*\gamma$. Moreover, 
\[K=\ker(\varphi_\beta)=\ker(\varphi_{\sigma^*\gamma})=\ker(\sigma^\vee\varphi_\gamma\sigma)=\sigma^{-1}(K(\gamma))=\sigma^{-1}(K).\]
Therefore $\sigma\in G_K$ and we conclude that the map $\mathcal{P}_D(K)\to\mathcal{P}_D$ is injective.
\end{proof}

We will say that an endomorphism $f\in\mathrm{End}(A)$ is \textit{totally positive} if its minimal polynomial has strictly positive roots. This notion can also clearly be extended to $\mathbb{Q}$-endomorphisms. We will now prove the following theorem, which is a direct generalization of Lange's main result in \cite{Lange}:

\begin{theorem}\label{counttheorem}
If $K\leq A$ is a subgroup of type $D$ and $\alpha_0$ is a polarization with $K(\alpha_0)=K$, there is a bijection
\[\mathcal{P}_D(K)\longleftrightarrow\mathrm{Aut}_+^s(A^\vee)/G_K,\]
where $\mathrm{Aut}_+^s(A^\vee)$ consists of all totally positive automorphisms of $A$ that are symmetric with respect to the Rosati involution of the dual polarization $\alpha_0^\vee$ on $A^\vee$, and $G_K$ acts on $\mathrm{Aut}_+^s(A^\vee)$ by $\mu:\eta\mapsto\mu^\vee\eta(\mu^\dagger)^\vee$. 
\end{theorem}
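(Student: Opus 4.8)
The plan is to set up an explicit map between the two sets and to mimic Lange's argument in \cite{Lange}, transporting everything to the dual abelian variety. First I would fix the polarization $\alpha_0$ with $K(\alpha_0)=K$ and observe that it gives an isogeny $\varphi_{\alpha_0}\colon A\to A^\vee$ with kernel $K$, and that composing with it converts any other polarization $\beta$ of type $D$ with $K(\beta)=K$ into an endomorphism. More precisely, given such a $\beta$, the map $\varphi_\beta\colon A\to A^\vee$ also has kernel $K$, so $\eta_\beta:=\varphi_{\alpha_0}\circ\varphi_\beta^{-1}$ makes sense as an element of $\End(A^\vee)\otimes\Q$; the equality of kernels forces $\eta_\beta$ to actually be an automorphism of $A^\vee$ (its inverse is $\varphi_\beta\circ\varphi_{\alpha_0}^{-1}$, again well-defined because $K(\alpha_0)=K(\beta)=K$). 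The assignment $\beta\mapsto\eta_\beta$ is the map I want to promote to a bijection.

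Next I would verify that $\eta_\beta$ lands in $\Aut_+^s(A^\vee)$. Symmetry with respect to the Rosati involution $\dagger$ attached to $\alpha_0^\vee$ is the statement that $\varphi_{\alpha_0^\vee}^{-1}\eta_\beta^\vee\varphi_{\alpha_0^\vee}=\eta_\beta$; using $\varphi_{\alpha_0^\vee}=\varphi_{\alpha_0}^{-1}$ (up to the canonical identification $A\simeq A^{\vee\vee}$) and the fact that $\varphi_\beta$ and $\varphi_{\alpha_0}$ are themselves symmetric (being polarizations), this reduces to a short diagram chase. Total positivity is the more substantive point: it comes from the positive-definiteness of the Hermitian forms attached to the polarizations $\beta$ and $\alpha_0$ — the eigenvalues of $\eta_\beta$ are the ratios of the eigenvalues of the associated Hermitian/period matrices, hence positive reals, exactly as in Lange's original argument. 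Conversely, given any $\eta\in\Aut_+^s(A^\vee)$, I would set $\varphi_\beta:=\eta^{-1}\varphi_{\alpha_0}$ and check that the corresponding class $\beta\in\mathrm{NS}(A)$ is an honest polarization of type $D$ with $K(\beta)=K$: ampleness/positivity follows from total positivity of $\eta$ together with ampleness of $\alpha_0$, symmetry of $\varphi_\beta$ follows from $\dagger$-symmetry of $\eta$, the type is $D$ because $\deg\varphi_\beta=\deg\varphi_{\alpha_0}$ and $\ker\varphi_\beta=\ker\varphi_{\alpha_0}=K$ (since $\eta$ is an automorphism). This shows $\beta\mapsto\eta_\beta$ is a bijection between $\{\beta\ \text{of type}\ D:\ K(\beta)=K\}$ and $\Aut_+^s(A^\vee)$ before quotienting.

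Finally I would check that this bijection is equivariant for the relevant $G_K$-actions, so that it descends to the quotients. For $\sigma\in G_K$ acting on polarizations by $\beta\mapsto\sigma^*\beta$, we have $\varphi_{\sigma^*\beta}=\sigma^\vee\varphi_\beta\sigma$, hence
\[
\eta_{\sigma^*\beta}=\varphi_{\alpha_0}\varphi_{\sigma^*\beta}^{-1}=\varphi_{\alpha_0}\sigma^{-1}\varphi_\beta^{-1}(\sigma^\vee)^{-1}.
\]
Rewriting $\varphi_{\alpha_0}\sigma^{-1}\varphi_{\alpha_0}^{-1}$ in terms of the Rosati involution, and noting that $\sigma\in G_K$ preserves $K$ so these manipulations stay within automorphisms, one massages this into $\mu^\vee\eta_\beta(\mu^\dagger)^\vee$ for the appropriate $\mu$ (a dual/Rosati-twist of $\sigma^{-1}$); getting the precise form of the action to match the statement $\mu\colon\eta\mapsto\mu^\vee\eta(\mu^\dagger)^\vee$ is the part that needs the most care. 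I expect the main obstacle to be exactly this bookkeeping with duals and Rosati involutions — making sure the canonical identifications $A\simeq A^{\vee\vee}$, $\varphi_{\alpha_0^\vee}=\varphi_{\alpha_0}^{-1}$, and $(\sigma^\vee)^\vee=\sigma$ are applied consistently so that the two $G_K$-actions correspond on the nose; the positivity and type computations, while essential, are routine adaptations of Lange's argument once the algebraic framework is in place.
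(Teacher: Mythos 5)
Your proposal is correct and follows essentially the same route as the paper: compose $\varphi_\beta$ with $\varphi_{\alpha_0}^{-1}$ to obtain a symmetric, totally positive automorphism of $A^\vee$, invert the construction to recover the polarizations of type $D$ with kernel $K$, and track the $G_K$-action via $\varphi_{\mu^*\beta}=\mu^\vee\varphi_\beta\mu$. The only deviation is that you work with $\eta_\beta=\varphi_{\alpha_0}\varphi_\beta^{-1}$, the inverse of the paper's $\sigma_\beta=\varphi_\beta\varphi_{\alpha_0}^{-1}$, which yields the transposed action $\eta\mapsto(\mu^\dagger)^\vee\eta\,\mu^\vee$; this matches the stated action after replacing $\mu$ by $\mu^\dagger$ (or composing your bijection with $\eta\mapsto\eta^{-1}$), which is legitimate because $G_K$ is stable under the Rosati involution of $\alpha_0$.
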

\begin{proof}

By \cite[Remark 5.2.5]{BL}, the map $\beta\mapsto\varphi_{\alpha_0}^{-1}\varphi_\beta$ gives a bijection between the set of polarizations on $A$ and the set of totally positive symmetric elements of $\mathrm{End}_\mathbb{Q}(A)$. Assume now that $K(\beta)=K$. By conjugating this element by $\varphi_{\alpha_0}$, we get the element $\sigma_\beta:=\varphi_\beta\varphi_{\alpha_0}^{-1}$. Note that since $\varphi_{\alpha_0}$ and $\varphi_\beta$ have the same kernel, if $\Lambda^\vee$ denotes the lattice of $A^\vee$ in $T_0(A^\vee)$, we have that
\[\rho_a(\varphi_\beta)(\rho_a(\varphi_{\alpha_0}^{-1}(\Lambda^\vee)))\subseteq\Lambda^\vee.\]
In particular, $\sigma_\beta$ is an honest endomorphism of $A^\vee$, and by the same reasoning so is its inverse. Therefore, $\sigma_\beta$ is an automorphism of $A^\vee$. 

Now, since the dual polarization satisfies $\varphi_{\alpha_0^\vee}=d\varphi_{\alpha_0}^{-1}$ for $d$ the exponent of $\varphi_{\alpha_0}$, it is a trivial calculation to check that $\sigma_{\beta}$ is symmetric with respect to $\alpha_0^\vee$:
\[\sigma_{\beta}^\dagger=\varphi_{\alpha_0^\vee}^{-1}\sigma_{\beta}^\vee\varphi_{\alpha_0^\vee}=\frac{1}{d}\varphi_{\alpha_0}\varphi_{\alpha_0}^{-1}\varphi_\beta d\varphi_{\alpha_0}^{-1}=\sigma_\beta.\]
Moreover, since we conjugated $\varphi_{\alpha_0}^{-1}\varphi_\beta$ by an isogeny to obtain $\sigma_\beta$, we see that these two elements have the same characteristic polynomial, which implies that $\sigma_\beta$ is a totally positive automorphism. 

Now if $\sigma\in\mathrm{Aut}_+^s(A^\vee)$ is a totally positive symmetric automorphism of $A^\vee$, we get that $\sigma\varphi_{\alpha_0}$ is self dual, since
\[(\sigma\varphi_{\alpha_0})^\vee=\varphi_{\alpha_0}\sigma^\vee=\varphi_{\alpha_0}\varphi_{\alpha_0^\vee}\varphi_{\alpha_0^\vee}^{-1}\sigma^\vee\varphi_{\alpha_0^\vee}\varphi_{\alpha_0^\vee}^{-1}=\varphi_{\alpha_0}\varphi_{\alpha_0^\vee}\sigma^\dagger\varphi_{\alpha_0^\vee}^{-1}=\sigma\varphi_{\alpha_0}.\]
In particular, $\sigma\varphi_{\alpha_0}=\varphi_\beta$ for some $\beta\in\mathrm{NS}(A)$. Since the roots of the characteristic polynomial of $\sigma$ are positive, we have that $\beta$ is ample, and clearly of type $D$ since $\alpha_0$ is. If $\mu\in G_K$, then $\varphi_{\mu^*\beta}=\mu^\vee\varphi_{\beta}\mu$, and so the action of $G_K$ on the polarization translates into the following action of $G_K$ on $\mathrm{Aut}_+^s(A^\vee)$:
\[\mu:\sigma\mapsto \mu^\vee\sigma\varphi_{\alpha_0}\mu\varphi_{\alpha_0}^{-1}=\mu^\vee\sigma(\mu^\dagger)^\vee.\]
This concludes the proof of the theorem.
 \end{proof}

 The previous result could also be restated in the following equivalent way, which has the benefit of reducing the problem to working with automorphisms of $A$ as opposed to the dual variety:

	\begin{theorem}
    	If $K\leq A$ is a subgroup of type $D$ and $\alpha_0$ is a polarization with $K(\alpha_0)=K$, there is a bijection
    \[\mathcal{P}_D(K)\longleftrightarrow \mathrm{Aut}_+^s(A,K)/\mathrm{Aut}(A),\]
    where $\mathrm{Aut}_+^s(A,K)$ consists of all totally positive automorphisms of $A$ that are symmetric with respect to the Rosati involution of the polarization $\alpha_0$ on $A$ and send $K$ to itself, and $\mathrm{Aut}(A)$ acts on $\mathrm{Aut}_+^s(A,K)$ by $\mu:\sigma\mapsto\mu^{-1} \sigma \mu$. 		
	\end{theorem}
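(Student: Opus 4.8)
The plan is to deduce the statement from Theorem~\ref{counttheorem} by composing its bijection with the duality anti-isomorphism $\mathrm{Aut}(A^\vee)\to\mathrm{Aut}(A)$, $\eta\mapsto\eta^\vee$; concretely this realizes the bijection as $[\beta]\mapsto[\varphi_{\alpha_0}^{-1}\varphi_\beta]$. The only inputs needed are the same as in the previous proofs: Lange's parametrization $\beta\mapsto\varphi_{\alpha_0}^{-1}\varphi_\beta$ of polarizations on $A$ by totally positive symmetric elements of $\mathrm{End}_\mathbb{Q}(A)$ (\cite[Remark 5.2.5]{BL}), together with the identities $\varphi_{\alpha_0}^\vee=\varphi_{\alpha_0}$, $\varphi_\beta^\vee=\varphi_\beta$, $\varphi_{\alpha_0^\vee}=d\varphi_{\alpha_0}^{-1}$ and $(\mu^\dagger)^\vee=\varphi_{\alpha_0}\mu\varphi_{\alpha_0}^{-1}$ already exploited in the proof of Theorem~\ref{counttheorem}.

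The first step is to check that $\eta\mapsto\eta^\vee$ restricts to a bijection $\mathrm{Aut}_+^s(A^\vee)\to\mathrm{Aut}_+^s(A,K)$. Since an automorphism and its dual have the same characteristic polynomial, total positivity passes across, and a short computation with $\varphi_{\alpha_0^\vee}=d\varphi_{\alpha_0}^{-1}$ shows that $\eta$ is symmetric for the Rosati involution of $\alpha_0^\vee$ precisely when $\eta^\vee$ is symmetric for that of $\alpha_0$. The assertion with genuine content is that $\eta^\vee$ stabilizes $K$: writing $\eta=\varphi_\beta\varphi_{\alpha_0}^{-1}$ for the polarization $\beta$ attached to $\eta$ in Theorem~\ref{counttheorem}, one has $K(\beta)=K$ (because $\ker\varphi_\beta=\ker(\eta\varphi_{\alpha_0})=\varphi_{\alpha_0}^{-1}(\ker\eta)=\ker\varphi_{\alpha_0}$) and $\eta^\vee=\varphi_{\alpha_0}^{-1}\varphi_\beta$, whence $(\eta^\vee)^{-1}(K)=\ker(\varphi_{\alpha_0}\circ\eta^\vee)=\ker\varphi_\beta=K$, i.e.\ $\eta^\vee(K)=K$; running these computations in reverse shows $\sigma\mapsto\sigma^\vee$ is the inverse map. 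Finally, applying $(-)^\vee$ to the $G_K$-action $\mu:\eta\mapsto\mu^\vee\eta(\mu^\dagger)^\vee$ of Theorem~\ref{counttheorem} turns it into $\mu:\sigma\mapsto\mu^\dagger\sigma\mu$ on $\mathrm{Aut}_+^s(A,K)$ --- that is, conjugation by $G_K$ twisted by the Rosati involution of $\alpha_0$ --- so composing with Theorem~\ref{counttheorem} produces the asserted bijection $\mathcal{P}_D(K)\longleftrightarrow\mathrm{Aut}_+^s(A,K)/G_K$, sending $[\beta]$ to $[\varphi_{\alpha_0}^{-1}\varphi_\beta]$.

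One can also argue directly, mirroring the proof of Theorem~\ref{counttheorem} but multiplying by $\varphi_{\alpha_0}^{-1}$ on the left. By \cite[Remark 5.2.5]{BL} the element $\sigma_\beta:=\varphi_{\alpha_0}^{-1}\varphi_\beta$ is totally positive and $\alpha_0$-symmetric in $\mathrm{End}_\mathbb{Q}(A)$; when $K(\beta)=K$, its dual $\sigma_\beta^\vee=\varphi_\beta\varphi_{\alpha_0}^{-1}$ is the honest automorphism of $A^\vee$ produced in Theorem~\ref{counttheorem}, so $\sigma_\beta\in\mathrm{Aut}(A)$, and $\sigma_\beta(K)=K$ as above; conversely, for $\sigma\in\mathrm{Aut}_+^s(A,K)$ the homomorphism $\varphi_{\alpha_0}\sigma$ is self-dual, hence equals $\varphi_\beta$ for some $\beta\in\mathrm{NS}(A)$, which is a polarization of type $D$ because $\varphi_{\alpha_0}^{-1}\varphi_\beta=\sigma$ is totally positive symmetric and $\ker\varphi_\beta=\sigma^{-1}(K)=K$, and $\sigma_{\mu^*\beta}=\varphi_{\alpha_0}^{-1}(\mu^\vee\varphi_\beta\mu)=\mu^\dagger\sigma_\beta\mu$ gives the equivariance. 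Either way, the one step I expect to be a real obstacle --- as opposed to formal bookkeeping with the duality and Rosati identities --- is this \emph{integrality} point: that $\varphi_{\alpha_0}^{-1}\varphi_\beta$ and its inverse lie in $\mathrm{End}(A)$ and not merely in $\mathrm{End}_\mathbb{Q}(A)$. This is not visible from the lattice of $A$ alone, where $\varphi_{\alpha_0}^{-1}\varphi_\beta$ only obviously carries $\Lambda$ into the overlattice $\varphi_{\alpha_0}^{-1}(\Lambda^\vee)\supseteq\Lambda$, and it is exactly what the passage to $A^\vee$ through Theorem~\ref{counttheorem} supplies.
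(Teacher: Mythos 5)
Your proposal is correct and takes essentially the same route as the paper: the paper's own proof also obtains the bijection by composing Theorem \ref{counttheorem} with duality, sending $\sigma_\beta=\varphi_\beta\varphi_{\alpha_0}^{-1}$ to $\sigma_\beta^\vee=\varphi_{\alpha_0}^{-1}\varphi_\beta\in\mathrm{Aut}_+^s(A,K)$, with the integrality of $\varphi_{\alpha_0}^{-1}\varphi_\beta$ supplied exactly as you say by the passage through $A^\vee$ (your verification that $f_\beta(K)=K$ is more detailed than the paper's ``it is clear''). Where you end up with the quotient $\mathrm{Aut}_+^s(A,K)/G_K$ under the twisted action $\mu\colon\sigma\mapsto\mu^\dagger\sigma\mu$ rather than the statement's $\mathrm{Aut}(A)$ acting by $\mu^{-1}\sigma\mu$, your version is the faithful transport of Theorem \ref{counttheorem} (plain conjugation by a general $\mu\in\mathrm{Aut}(A)$ need not preserve $\alpha_0$-symmetry, and $\mathcal{P}_D(K)$ is by definition a quotient by $G_K$), so this discrepancy reflects an imprecision in the printed statement rather than a gap in your argument.
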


      \begin{proof}
    Using the previous theorem, we note that there is a bijection $\mathrm{Aut}_+^s(A^\vee) \to \mathrm{Aut}_+^s(A,K)$, given by the following: for every $\sigma_{\beta}= \varphi_\beta\varphi_{\alpha_0}^{-1}$ we have 
    $$\sigma_{\beta}^\vee =f_{\beta} =  \varphi_{\alpha_0}^{-1} \varphi_\beta \in \mathrm{Aut}_+^s(A).
    $$
    Furthermore, it is clear that $f_{\beta}(K)=K$. 
    \end{proof}

Let $\mathcal{H}_D$ be the set of all finite subgroups $K$ of $A$ of type $D$ such that there exists a polarization $\alpha$ with $K(\alpha)=K$. Then $\mathrm{Aut}(A)$ acts on $\mathcal{H}_D$ in the obvious way. We obtain the following corollary of Theorem \ref{counttheorem}: 

\begin{corollary}\label{verygeneral}
There is a bijection between
\[\mathcal{P}_D\hspace{0.5cm}\longleftrightarrow\bigsqcup_{[K]\in\mathcal{H}_D/\mathrm{Aut}(A)}\mathrm{Aut}_+^s(A^\vee)/G_K\]
where the action of $G_K$ on $\mathrm{Aut}_+^s(A^\vee)$ is as in Theorem \ref{counttheorem}.
\end{corollary}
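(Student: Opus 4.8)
The plan is to slice $\mathcal{P}_D$ according to the $\mathrm{Aut}(A)$-orbit of the kernel of a polarization, and then apply Theorem \ref{counttheorem} one orbit at a time. First I would check that sending a class $[\beta]\in\mathcal{P}_D$ to the $\mathrm{Aut}(A)$-orbit of $K(\beta)$ is well defined: by the identity $K(\sigma^*\beta)=\sigma^{-1}(K(\beta))$ established in the proof of the preceding Proposition, the kernels of any two representatives of a single class in $\mathcal{P}_D$ lie in one $\mathrm{Aut}(A)$-orbit, and that orbit belongs to $\mathcal{H}_D$ by definition. This yields a partition of $\mathcal{P}_D$ indexed by $\mathcal{H}_D/\mathrm{Aut}(A)$, whose block over a class $[K]$ consists of exactly those $[\beta]$ with $K(\beta)$ conjugate to $K$ under $\mathrm{Aut}(A)$.

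Next I would identify each block with the corresponding $\mathcal{P}_D(K)$. Choose a representative $K$ of $[K]$; since $K\in\mathcal{H}_D$ there is a polarization $\alpha_0$ of type $D$ with $K(\alpha_0)=K$, so Theorem \ref{counttheorem} applies with this $\alpha_0$. The preceding Proposition says that $\mathcal{P}_D(K)$ injects into $\mathcal{P}_D$ with image exactly the set of $[\beta]$ admitting some $\sigma\in\mathrm{Aut}(A)$ with $\sigma^{-1}(K(\beta))=K$ — that is, precisely the block over $[K]$. Hence $\mathcal{P}_D=\bigsqcup_{[K]}\mathcal{P}_D(K)$, a genuinely disjoint union over a set of orbit representatives. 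Applying Theorem \ref{counttheorem} to each $K$ replaces $\mathcal{P}_D(K)$ by $\mathrm{Aut}_+^s(A^\vee)/G_K$ with the action $\mu:\eta\mapsto\mu^\vee\eta(\mu^\dagger)^\vee$, and concatenating these bijections produces the asserted bijection.

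The only delicate point — and it is bookkeeping rather than a genuine obstacle — is that the right-hand side should not depend on the choices made inside each orbit: replacing $K$ by $\tau(K)$ for $\tau\in\mathrm{Aut}(A)$ forces a different auxiliary polarization and conjugates the stabilizer to $G_{\tau(K)}=\tau G_K\tau^{-1}$, and one must check that the induced map on quotients $\mathrm{Aut}_+^s(A^\vee)/G_K\to\mathrm{Aut}_+^s(A^\vee)/G_{\tau(K)}$ coming from the bijection of Theorem \ref{counttheorem} (essentially conjugation by $\tau$ transported to the dual side) is itself a bijection. Once this is noted, the disjoint union on the right is well defined up to canonical bijection, and the corollary follows by simply assembling the Proposition and Theorem \ref{counttheorem} over $\mathcal{H}_D/\mathrm{Aut}(A)$.
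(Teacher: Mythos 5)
Your argument is correct and is exactly the intended derivation: the paper states the corollary without proof, the implicit reasoning being precisely your partition of $\mathcal{P}_D$ by the $\mathrm{Aut}(A)$-orbit of the kernel (justified by the preceding Proposition) followed by an application of Theorem \ref{counttheorem} to each block. Your extra remark on independence of the orbit representative is sound bookkeeping that the paper leaves tacit.
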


We also get the following corollary:

\begin{corollary}
For any $D$ and any closed irreducible subvariety $Z\subseteq\mathcal{A}_g$, the function $p_D:Z\to\mathbb{N}\cup\{0\}$ is constant on a very general element of $Z$.
\end{corollary}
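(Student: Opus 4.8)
The plan is to recognize $p_D=\pi_D$ as an invariant of the polarized integral Hodge structure of $A$, and then to invoke the standard fact that such an invariant is constant on the complement of a countable union of proper closed subvarieties of $Z$ --- which is exactly what \emph{very general} will mean.

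First I would reduce to a smooth family. Fixing a level-$\ell$ structure for some $\ell\ge 3$, the cover $\mathcal A_g[\ell]\to\mathcal A_g$ is finite \'etale and carries a universal polarized abelian scheme; let $\tilde Z$ be an irreducible component of the preimage of the smooth locus $Z^{\circ}$ of $Z$, so that $\tilde Z$ is smooth, surjects onto $Z^{\circ}$ via a finite map, and supports a polarized abelian scheme $\pi\colon\mathcal X\to\tilde Z$. Put $\mathbb V=R^1\pi_*\Z$ with its polarization form, and fix a base point $t_0$. For every $t\in\tilde Z$, Theorem \ref{counttheorem} and Corollary \ref{verygeneral} express $\pi_D(\mathcal X_t)$ entirely in terms of: the ring $\mathrm{End}(\mathcal X_t)=\mathrm{End}_{\mathrm{HS}}(\mathbb V_t)$, the Rosati involution attached to the polarization class (a monodromy-invariant section of a tensor power of $\mathbb V$), the finite set $\mathcal H_D$ of subgroups of type $D$ supporting a polarization --- which is read off from the torsion cosets of $\mathrm{NS}(\mathcal X_t)$ together with positivity of the associated forms --- and the action of $\mathrm{Aut}(\mathcal X_t)=\mathrm{End}(\mathcal X_t)^{\times}$. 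In other words, $\pi_D(\mathcal X_t)$ is a function of the isomorphism class of the quadruple $(\mathbb V_t,\ \text{polarization},\ \mathrm{End}(\mathcal X_t),\ \mathrm{NS}(\mathcal X_t))$, where $\mathrm{NS}(\mathcal X_t)\subseteq\wedge^2\mathbb V_t$ and $\mathrm{End}(\mathcal X_t)\subseteq\End(\mathbb V_t)$ are the integral Hodge classes in the indicated tensor constructions of $\mathbb V_t$.

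Next I would apply the Hodge-locus principle. The loci in $\tilde Z$ over which extra integral Hodge classes appear in finitely many tensor constructions of $\mathbb V\oplus\mathbb V^\vee$ form, by Cattani--Deligne--Kaplan, a countable union $\Sigma_0$ of proper closed algebraic subvarieties; set $\Sigma=\Sigma_0\cup(\text{ramification loci of }\tilde Z\to Z^\circ)$, still a countable union of proper closed subsets, and let $\Sigma'\subseteq Z$ be the image of $\Sigma$ together with $Z\setminus Z^{\circ}$, which is again a countable union of proper closed subsets. For $t\in\tilde Z\setminus\Sigma$ every integral Hodge class of $\mathbb V_t$ and of its relevant tensor powers is the fibre of a global monodromy-invariant section, so parallel transport along any path from $t_0$ to $t$ carries $\mathrm{End}(\mathcal X_{t_0})$ onto $\mathrm{End}(\mathcal X_t)$ and $\mathrm{NS}(\mathcal X_{t_0})$ onto $\mathrm{NS}(\mathcal X_t)$, compatibly with the polarization form, which is locally constant because it is topological. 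Hence all the quadruples above, for $t\notin\Sigma$, are isomorphic, and therefore $\pi_D(\mathcal X_t)=\pi_D(\mathcal X_{t_0})$ for every such $t$. Declaring the very general points of $Z$ to be those of $Z\setminus\Sigma'$ gives the claim; finiteness of the common value is guaranteed by Corollary \ref{verygeneral}, since $\mathcal H_D/\mathrm{Aut}(A)$ is finite and each quotient $\mathrm{Aut}_+^s(A^\vee)/G_K$ is finite by the (generalized) Narasimhan--Nori finiteness.

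The one delicate point --- and the main thing to be careful about --- is that the transported identification must respect not merely the abstract lattice $\mathrm{NS}$ but the precise subset of its elements that are polarizations of type $D$, together with the $\mathrm{Aut}$-action on that subset. This is handled by observing that the polarization form and the $\End$-module structure are transported along with $\mathrm{NS}$, and that \emph{ample} and \emph{of type $D$} are expressed purely in terms of this transported data: positivity of the symmetric form attached to $\beta\in\mathrm{NS}$ via the polarization, and the elementary divisors of $\varphi_\beta$, both of which are determined by $\beta$ together with the fixed symplectic form. Thus the entire count of Corollary \ref{verygeneral} is transported verbatim, and $p_D$ is constant on the very general locus of $Z$.
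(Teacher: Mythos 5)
Your proposal is correct and takes essentially the same route as the paper: the paper's (much terser) proof simply asserts that, via the rational representation and period matrices, the endomorphism algebra and the Rosati involution (and hence $\mathrm{Aut}_+^s(A^\vee)$ and the $G_K$-actions) are constant at very general points of $Z$, so the count in Corollary \ref{verygeneral} is constant. Your parallel-transport/Hodge-locus argument is a more careful justification of that same constancy, with the added merit of explicitly handling the constancy of $\mathrm{NS}$, of the set $\mathcal{H}_D$, and of the ampleness/type conditions via the totally-positive-symmetric-element criterion, points the paper leaves implicit.
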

\begin{proof}
Via the rational representation and by looking at period matrices, it is clear that for a very general element of $Z$, the (rational representation of the) endomorphism algebra as well as the Rosati involution on the dual abelian variety are constant. This implies that $\mathrm{Aut}_+^s(A^\vee)$ is constant for a very general element of $Z$, and the action of $G_K$ as well.
\end{proof}

\begin{rem}
We note that when speaking of a very general element of a subvariety $Z\subseteq\mathcal{A}_g$, we mean the locus of principally polarized abelian varieties in $Z$ that have the same endomorphism ring of the lowest possible rank. We recall that the rank of the endomorphism algebra is an upper semi-continuous function on $\mathcal{A}_g$.
\end{rem}

Another striking and unexpected corollary of Theorem \ref{counttheorem} is the following:

\begin{corollary}\label{ppmin}
If $A$ has a polarization of type $D$, then $\pi(A)\leq\pi_D(A)$.
\end{corollary}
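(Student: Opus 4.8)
The plan is to exhibit an injection from the set of principal polarizations on $A$ (modulo $\mathrm{Aut}(A)$) into $\mathcal{P}_D$. The natural candidate for such a map is to start with a principal polarization $\alpha$ and multiply its associated isogeny by the one coming from a fixed polarization of type $D$: concretely, if $\alpha_0$ is a polarization of type $D$ on $A$, then for a principal polarization $\alpha$ we should look at $\varphi_\alpha\varphi_{\alpha_0}^{-1}$ or rather, to stay with honest morphisms, at the composite $\varphi_{\alpha_0}\varphi_\alpha^{-1}$, which makes sense since $\varphi_\alpha$ is an isomorphism. Pulling back $\alpha_0$ along $\varphi_\alpha^{-1}$ does not quite land in $\mathrm{NS}(A)$, so instead I would use Theorem \ref{counttheorem} directly: fix once and for all a subgroup $K=K(\alpha_0)$ of type $D$, and observe that the identity automorphism lies in $\mathrm{Aut}_+^s(A^\vee)$ and that multiplication arguments let us transport principal polarizations into $\mathcal{P}_D(K)\subseteq\mathcal{P}_D$.

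More precisely, here are the steps I would carry out. First, recall from the proof of Theorem \ref{counttheorem} that $\mathcal{P}_I(K_0)$ — with $K_0=\{0\}$ the trivial subgroup of type $I=(1,\ldots,1)$ — is in bijection with $\mathrm{Aut}_+^s(A^\vee)/\mathrm{Aut}(A)$, where here $G_{K_0}=\mathrm{Aut}(A)$ since every automorphism fixes $\{0\}$; and since $I$ has only one subgroup of its type, $\mathcal{H}_I/\mathrm{Aut}(A)$ is a single class, so $\pi(A)=\#\big(\mathrm{Aut}_+^s(A^\vee)/\mathrm{Aut}(A)\big)$, recovering Lange's theorem. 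Second, for a polarization $\alpha_0$ of type $D$ with $K(\alpha_0)=K$, Theorem \ref{counttheorem} gives $\#\mathcal{P}_D(K)=\#\big(\mathrm{Aut}_+^s(A^\vee)/G_K\big)$, and by Corollary \ref{verygeneral}, $\pi_D(A)=\sum_{[K]\in\mathcal{H}_D/\mathrm{Aut}(A)}\#\big(\mathrm{Aut}_+^s(A^\vee)/G_K\big)\geq \#\big(\mathrm{Aut}_+^s(A^\vee)/G_K\big)$ for any single $K$. Third — and this is the crux — I claim $G_K\subseteq\mathrm{Aut}(A)$ with the induced action, so that the quotient $\mathrm{Aut}_+^s(A^\vee)/\mathrm{Aut}(A)$ is a further quotient of $\mathrm{Aut}_+^s(A^\vee)/G_K$; hence $\pi(A)=\#\big(\mathrm{Aut}_+^s(A^\vee)/\mathrm{Aut}(A)\big)\leq\#\big(\mathrm{Aut}_+^s(A^\vee)/G_K\big)\leq\pi_D(A)$. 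Here one must check that the two actions of $\mathrm{Aut}(A)$ (the one on $\mathrm{Aut}_+^s(A^\vee)$ used in Lange's count via $K_0$, namely $\mu:\eta\mapsto\mu^\vee\eta(\mu^\dagger)^\vee$ with $\dagger$ the Rosati involution of the principal polarization, versus the one via $G_K$ with $\dagger$ the Rosati involution of $\alpha_0$) are compatible enough that a coarser orbit decomposition for the larger group $G_{K_0}=\mathrm{Aut}(A)$ refines the one for $G_K$; since $G_K\le G_{K_0}$ as subgroups of $\mathrm{Aut}(A)$ and the formula for the action is the same up to conjugation by $\varphi_{\alpha_0}$, the $G_K$-orbits are unions refine to (are contained in) the $\mathrm{Aut}(A)$-orbits, giving the surjection on quotients and hence the inequality on cardinalities.

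I expect the main obstacle to be precisely this last comparison of the two group actions: the Rosati involution $\dagger$ appearing in Lange's theorem is taken with respect to a principal polarization on $A$, whereas the one in Theorem \ref{counttheorem} is with respect to $\alpha_0^\vee$ on $A^\vee$, and one needs to verify that after the identification $\eta\leftrightarrow\varphi_\alpha^{-1}(\text{something})$ these match up so that $G_K$-equivalence implies $\mathrm{Aut}(A)$-equivalence of the corresponding principal polarizations. Once one fixes a principal polarization to define $\dagger$ and writes everything in terms of the isogenies $\varphi_\bullet$, this should reduce to the observation that $\sigma(K)=K$ is an extra constraint cutting out a subgroup $G_K\le\mathrm{Aut}(A)$, and that passing from a subgroup to the full group can only merge orbits, never split them — so the number of orbits can only go down, yielding $\pi(A)\le\pi_D(A)$.
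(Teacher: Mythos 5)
Your argument is essentially the paper's own proof: the paper deduces the corollary in one line from Theorem \ref{counttheorem} by noting that $G_K\leq\mathrm{Aut}(A)$, so that $\#\mathcal{P}_D(K)=\#\bigl(\mathrm{Aut}_+^s(A^\vee)/G_K\bigr)\geq\#\bigl(\mathrm{Aut}_+^s(A^\vee)/\mathrm{Aut}(A)\bigr)=\pi(A)$, which is exactly your chain of inequalities via Lange's count for $K_0=\{0\}$ and Corollary \ref{verygeneral}. The compatibility issue you flag at the end (the Rosati involution taken with respect to a principal polarization versus with respect to $\alpha_0^\vee$, and the resulting identification of the two quotients) is left implicit in the paper's proof as well, so your write-up matches its approach and level of detail.
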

\begin{proof}
This happens because if $\alpha$ is a polarization of type $D$ with kernel $K$, then $G_K\leq\mathrm{Aut}(A)$, and so the number of elements in $\mathcal{P}_D(K)$ is greater than or equal to the number of elements in $\mathrm{Aut}_+^s(A^\vee)/\mathrm{Aut}(A)$.
\end{proof}

\section{Irreducible components of $\mathrm{Sing}(\mathcal{A}_g)$}\label{moduli1}

Let $p$ be an odd prime; then the cyclic group $C_p := \langle \rho \rangle$ of order $p$ has two rational irreducible representations: the trivial representation $\chi_0$, and another $W$ of degree $p-1$ where $W \otimes\mathbb{C} = U_1 \oplus \ldots\oplus U_{p-1}$, and $U_j$ are the non-trivial complex irreducible representations of $C_p$ given by $U_j(\rho) = w_p^j$, with $w_p$ a primitive $p$-th root of $1$. 
	
In particular, if $C_p$ acts on a principally polarized abelian variety $(A,\Theta)$ of dimension $g$, then its rational representation has the form $\rho_r(C_p) = 2b \, \chi_0 \oplus c\, W$, with $b$ and $c$ non-negative integers; therefore
	$$
	g = b + \frac{c (p-1)}{2}.
	$$

We have a decomposition
\[T_0(A)=V_0\oplus V_1\oplus\cdots\oplus V_{p-1},\]
where $V_i$ is the subspace where the automorphism acts as $w_p^i$. Define
\[n_i:=\dim_\mathbb{C}V_i.\]
We note that $b = n_0$ and $c = n_i +n_{p-i}$ for any $i >0$. 
    
	If $c>0$, then using the isotypical decomposition (cf. \cite[Section 1]{LR}), we get two abelian subvarieties $X,Y\subseteq A$, and the addition map 
	\[a:X\times Y\to A\]
	is a $\rho$-equivariant isogeny where $\rho$ acts on $X$ trivially and on $Y$ with finitely many fixed points. Moreover, $\dim X =b$ and $\dim Y= g-b = \frac{c (p-1)}{2}$; also,
	$a^*\Theta=\Theta_X\boxtimes\Theta_Y$ where $\Theta_X:=\Theta|_X$ and $\Theta_Y:=\Theta|_Y$. Note that $\ker a\simeq X\cap Y$ and $\ker \Theta|_X \simeq \ker \Theta|_Y \simeq X \cap Y$; these groups are all subgroups of the group of $p$-torsion points, and their order is an even power of $p$.  
	
	Observe that $Y$ is the image of $A$ under the endomorphism $1_A - \rho$ and $X$ is the connected component of its kernel containing $0$. 
	
	\begin{lemma}
		The number of fixed points of $\rho$ restricted to $Y$ is  $p^{c}$.
	\end{lemma}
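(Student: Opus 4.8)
The plan is to count the fixed points of $\rho$ on $Y$ by identifying $\mathrm{Fix}(\rho|_Y)$ with the kernel of the endomorphism $1_Y - \rho|_Y$ and computing the degree of this endomorphism. Since $\rho$ acts on $Y$ with finitely many fixed points, the map $1_Y - \rho|_Y$ is an isogeny of $Y$, and $\#\mathrm{Fix}(\rho|_Y) = \#\ker(1_Y - \rho|_Y) = \deg(1_Y - \rho|_Y)$. To compute this degree, I would pass to the analytic (rational) representation: $\deg(1_Y - \rho|_Y) = \det\bigl(\rho_r(1_Y - \rho|_Y)\bigr)$, where $\rho_r$ denotes the rational representation on $H_1(Y,\mathbb{Q})$ (equivalently, the determinant of the analytic representation times its complex conjugate, i.e.\ $|\det_{\mathbb{C}}(1 - d\rho|_{T_0Y})|^2$).

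The key computation is then linear-algebraic. On $T_0(Y) = V_1 \oplus \cdots \oplus V_{p-1}$, the differential $d\rho$ acts on $V_j$ as multiplication by $w_p^j$, so $d(1_Y - \rho|_Y)$ acts on $V_j$ as $(1 - w_p^j)\,\mathrm{id}$. Hence $\det_{\mathbb{C}}\bigl(d(1_Y-\rho|_Y)\bigr) = \prod_{j=1}^{p-1}(1 - w_p^j)^{n_j}$, and therefore
\[
\#\mathrm{Fix}(\rho|_Y) = \left|\prod_{j=1}^{p-1}(1-w_p^j)^{n_j}\right|^2 = \prod_{j=1}^{p-1}|1 - w_p^j|^{2n_j}.
\]
Now I would use the classical identity $\prod_{j=1}^{p-1}(1 - w_p^j) = \Phi_p(1) = p$ together with the fact that the factors $|1-w_p^j|$ are permuted appropriately: more carefully, since $n_j + n_{p-j} = c$ for all $j$ (as noted before the lemma) and $|1-w_p^j| = |1-w_p^{p-j}|$, we get
\[
\prod_{j=1}^{p-1}|1-w_p^j|^{2n_j} = \prod_{j=1}^{(p-1)/2}\bigl(|1-w_p^j|^2\bigr)^{n_j + n_{p-j}} = \left(\prod_{j=1}^{(p-1)/2}|1-w_p^j|^2\right)^{c} = \left(\prod_{j=1}^{p-1}(1-w_p^j)\right)^{c} = p^{c}.
\]

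The main obstacle — really the only subtle point — is justifying that $\#\ker(1_Y - \rho|_Y)$ equals the determinant of the rational representation of $1_Y - \rho|_Y$ rather than just the analytic determinant; this is the standard fact that for an isogeny $f$ of an abelian variety, $\deg f = \det \rho_r(f) = |\det \rho_a(f)|^2$ (see \cite[Section 12.2]{BL}), and one must check $1_Y - \rho|_Y$ is genuinely an isogeny, which follows from the hypothesis that $\rho$ has finitely many fixed points on $Y$ (equivalently $1$ is not an eigenvalue of $d\rho$ on $T_0 Y$, which is clear since $n_0$ accounts for the full fixed part and $Y$ carries none of it). Everything else is a routine cyclotomic computation using $\Phi_p(1)=p$ and the symmetry $n_j + n_{p-j} = c$.
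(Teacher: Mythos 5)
Your proof is correct, and it is a genuinely more self-contained route than the paper's: the paper disposes of the lemma in one line by citing \cite[Prop.\ 13.2.5]{BL}, which directly gives $\#\mathrm{Fix}(\rho|_Y)=p^{2\dim Y/(p-1)}=p^{c}$, whereas you in effect re-prove that statement in this special case. Your chain of reasoning — $\mathrm{Fix}(\rho|_Y)=\ker(1_Y-\rho|_Y)$, the map $1_Y-\rho|_Y$ is an isogeny because $T_0(Y)=V_1\oplus\cdots\oplus V_{p-1}$ carries no eigenvalue $1$, $\#\ker=\deg=|\det\rho_a(1_Y-\rho|_Y)|^2=\prod_{j=1}^{p-1}|1-w_p^j|^{2n_j}$, and then the cyclotomic identity $\Phi_p(1)=p$ combined with $n_j+n_{p-j}=c$ — is exactly the standard argument behind the cited proposition, so nothing is missing; the only quibble is that the degree formula $\deg f=\det\rho_r(f)=|\det\rho_a(f)|^2$ lives earlier in \cite{BL} than ``Section 12.2'' (it is the basic statement about isogenies of complex tori), but that is a citation detail, not a gap. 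What your version buys is transparency: one sees directly why the answer depends only on $c$ (via the symmetry $|1-w_p^j|=|1-w_p^{p-j}|$), while the paper's version buys brevity by outsourcing the computation to the reference.
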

	\begin{proof}
		By \cite[Prop. 13.2.5]{BL}, the number of fixed points of $\rho|_Y$ is equal to 
		\[\#\mathrm{Fix}(\rho|_Y)= p^{\frac{2\dim Y}{p-1}} = p^{c}. \]
		\end{proof}
		
	\begin{corollary}
		If $c=1$, then the isogeny $a:X\times Y\to A$ is an isomorphism of polarized varieties.
	\end{corollary}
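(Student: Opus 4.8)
The plan is to trace through the constraints imposed by $c = 1$ and show that the defect group $X \cap Y$ must be trivial. Recall that $\ker a \simeq X \cap Y \simeq \ker(\Theta|_X) \simeq \ker(\Theta|_Y)$, and this group has order an even power of $p$, say $p^{2m}$. So the goal reduces to showing $m = 0$ when $c = 1$.

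First I would analyze $Y$. When $c = 1$, we have $\dim Y = (p-1)/2$, and by the lemma the number of fixed points of $\rho|_Y$ is $p^{c} = p$. The key point is that $\Theta_Y = \Theta|_Y$ is a polarization on $Y$ whose kernel is isomorphic to $X \cap Y \cong (\mathbb{Z}/p^{a_1} \times \cdots)^2$ supported on $p$-torsion; since $\rho$ acts on $Y$ with $\mathbb{Z}[\zeta_p] \hookrightarrow \mathrm{End}(Y)$ and $\dim Y = [\mathbb{Q}(\zeta_p):\mathbb{Q}]/2$, the abelian variety $Y$ has $\mathbb{Q}(\zeta_p)$ acting with the smallest possible dimension. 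The action of $\rho$ preserves $\Theta_Y$, hence preserves its kernel $K := \ker\Theta_Y \subseteq Y[p]$, making $K$ a $\mathbb{Z}[\zeta_p]/(p) = \mathbb{F}_p[\zeta_p]/(\zeta_p - 1)^{p-1}$-module — but more usefully, $K$ sits inside $Y[p] \cong (\mathbb{Z}[\zeta_p]/p)^{2}$ as a module, and I would use the constraint that $K$ is isotropic-free (it's the full kernel) of type a perfect square to pin down its size.

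The cleanest route: use that $(1 - \rho)$ has image $Y$, so on $Y$ the endomorphism $(1 - \rho)$ is an isogeny; its degree is $|\mathbb{Z}[\zeta_p]/(1-\zeta_p)|^{2\dim Y/(p-1)} = p^{c} = p$ — matching the fixed-point count, since $\#\mathrm{Fix}(\rho|_Y) = \deg(1 - \rho|_Y)$. Now I would compare the polarization $\Theta_Y$ with the pullback structure. Since $\Theta|_X \boxtimes \Theta|_Y = a^*\Theta$ and $\Theta$ is principal, $\deg\varphi_{\Theta_X} \cdot \deg\varphi_{\Theta_Y} = (\deg a)^2 = |X \cap Y|^2$; combined with $\deg\varphi_{\Theta_X} = |X \cap Y| = \deg\varphi_{\Theta_Y}$ (these kernels are all isomorphic to $X \cap Y$), this is consistent for any $m$, so degree-counting alone is not enough — I need the rigidity coming from $\mathbb{Z}[\zeta_p]$ acting on $X \cap Y$.

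The main obstacle is precisely showing $X \cap Y = 0$ rather than merely bounding it. The idea is that $X \cap Y$, as a $C_p$-stable subgroup of $A[p]$ on which the polarization form is "full", would, if nontrivial, force $\dim Y$ to be strictly larger than $(p-1)/2$, or force an extra summand in the rational representation (increasing $c$), contradicting $c = 1$. Concretely: $X \cap Y \subseteq Y[p]$ is a nonzero $\mathbb{F}_p[C_p]$-submodule; since the only simple $\mathbb{F}_p[C_p]$-module appearing (other than the trivial one, which can't occur inside $Y$ because $\rho$ has no nonzero fixed abelian subvariety in $Y$ — though it does have fixed points, the relevant module-theoretic statement needs care) has $\mathbb{F}_p$-dimension... this is where I expect to do the real work, analyzing the $\mathbb{F}_p[C_p]$-module structure of $Y[p]$ together with the alternating Weil pairing and the requirement that $K(\Theta_Y)$ be a maximal isotropic-complement-free subgroup, concluding that its only $C_p$-stable possibility of square type is $0$. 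Once $X \cap Y = 0$, the isogeny $a: X \times Y \to A$ is injective, hence an isomorphism, and it is by construction an isomorphism of polarized varieties since $a^*\Theta = \Theta_X \boxtimes \Theta_Y$; this finishes the corollary.
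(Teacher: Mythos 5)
There is a genuine gap: the decisive step---showing $X\cap Y=0$---is exactly the place where you write ``this is where I expect to do the real work,'' and the $\mathbb{F}_p[C_p]$-module analysis of $Y[p]$ with the Weil pairing that you propose is never carried out. As it stands the argument establishes only the framework (order of $X\cap Y$ is an even power of $p$, $\#\mathrm{Fix}(\rho|_Y)=p^c=p$, degree counts are consistent for any $m$) and then stops short of the conclusion, so the corollary is not proved.

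What you are missing is a one-line observation that makes all of the module theory unnecessary, and it is the paper's entire proof: any point $P\in X\cap Y$ lies in $X$, on which $\rho$ acts trivially, so $P$ is a fixed point of $\rho|_Y$; hence $X\cap Y\subseteq\mathrm{Fix}(\rho|_Y)$, a group of order $p^c=p$ when $c=1$. Since $|X\cap Y|$ is an even power of $p$ (a perfect square) dividing $p$, it must equal $1$, so $\ker a=0$ and $a$ is an isomorphism, polarized because $a^*\Theta=\Theta_X\boxtimes\Theta_Y$. You had both ingredients in hand (the fixed-point count and the ``even power of $p$'' statement) but treated $X\cap Y$ only as a $C_p$-stable subgroup of $Y[p]$ rather than as a subgroup of the fixed-point set; making that identification collapses your sketch to the paper's short argument. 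If you insist on the module-theoretic route, you would still need to prove a precise statement pinning down the possible $C_p$-stable square-order subgroups, which you have not done, and which is strictly harder than the direct inclusion above.
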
	
	
	\begin{proof}
		If $P \in X \cap Y$, then $P$ is a fixed point of $\rho$ in $Y$, but in this case $\#\mathrm{Fix}(\rho|_Y) = p$ is not a square, and therefore $P=0$ whence $a$ has trivial kernel.
	\end{proof}

We will continue with the same notation as above: let $p$ be an odd prime, let $n\geq 3$ be a fixed integer with $p\nmid n$, and let $\rho_n\in\mathrm{GL}(A[n])$ be an automorphism. 

\begin{definition}
We define the set $\mathcal{A}_{g}(p,\rho_n)$ to be the locus of principally polarized abelian varieties $(A,\Theta)$ that possess an automorphism $\sigma\in\mathrm{Aut}(A,\Theta)$ of order $p$ such that $\sigma|_{A[n]}$ is conjugate to $\rho_n$. 
\end{definition}

By \cite[Theorem 1.5]{GMZ}, $\mathcal{A}_g(p,\rho_n)$ is a closed irreducible subvariety of $\mathcal{A}_g$ contained in the singular locus, and each irreducible component of $\mathrm{Sing}(\mathcal{A}_g)$ is of such a form. Moreover, 
\[\dim\mathcal{A}_g(p,\rho_n)=\frac{n_0(n_0+1)}{2}+\sum_{i=1}^{(p-1)/2}n_in_{p-i}.\]
Now by the previous analysis that shows that an element $(A,\Theta)\in\mathcal{A}_g(p,\rho_n)$ splits isogenously as a product $X\times Y$, and moreover following \cite[Section 3.5]{Auff}, we have that $\mathcal{A}_g(p,\rho_n)$ is contained in the image of a morphism
\[\mathcal{A}_{b}^D\times\mathcal{A}_{g-b}^{\tilde{D}}\to\mathcal{A}_g,\]
where $D=(d_1,\ldots,d_b)$ is the type of $\Theta|_X$, $\tilde{D}$ is the type of $\Theta|_Y$, and $\mathcal{A}_b^D$ consists of all triplets $(B,\mathcal{L}_B,h)$, where $(B,\mathcal{L}_B)$ is a polarized abelian variety of type $D$ and $h:K(\mathcal{L}_B)\to\left(\mathbb{Z}/d_1\mathbb{Z}\times\cdots\times\mathbb{Z}/d_b\mathbb{Z}\right)^2$ is a symplectic isomorphism with respect to certain given pairings (which we will not be needing in this article).
Now the automorphism $\sigma$ preserves $X$ (on which it is trivial) and $Y$, and therefore it induces an element $\psi_k\in\mathrm{GL}(Y[k])$ for any $k\in\mathbb{N}$.

Putting everything together, we obtain the following result:

\begin{proposition}\label{moduli}
Fix $k\geq3$ and let $\mathcal{A}_{g-b}^{\tilde{D}}(p,\psi_k)$ be the space of all polarized abelian varieties $(Z,\mathcal{L}_Z)$ of type $\tilde{D}$ and dimension $g-b$ with a symplectic isomorphism as above that have an automorphism $\eta$ of order $p$ such that the restriction of $\eta$ to the $k$-torsion points of the abelian variety is conjugate to $\psi_k$ and $\eta$ restricted to $K(\mathcal{L}_Z)$ is the identity. Then $\mathcal{A}_g(p,\rho_k)$ is the image of the restriction
\[\mathcal{A}_{b}^D\times\mathcal{A}_{g-b}^{\tilde{D}}(p,\psi_k)\to\mathcal{A}_g.\]
 \end{proposition}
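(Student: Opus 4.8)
The plan is to identify $\mathcal{A}_g(p,\rho_k)$ with a fiber-product-like construction over the $X$-side and the $Y$-side of the isogeny decomposition, and then observe that the $X$-side carries no constraint coming from $\sigma$. First I would start with $(A,\Theta)\in\mathcal{A}_g(p,\rho_k)$ and its automorphism $\sigma$ of order $p$. Using the isotypical decomposition recalled just above, $\sigma$ preserves $X$ and $Y$, with $\sigma|_X=\mathrm{id}$; the addition map $a:X\times Y\to A$ is a $\sigma$-equivariant isogeny with $a^*\Theta=\Theta_X\boxtimes\Theta_Y$, and $\ker a\simeq X\cap Y$ lies in the $p$-torsion. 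Since $p\nmid k$, the $k$-torsion of $A$ splits cleanly as $X[k]\oplus Y[k]$ under $a$, so the conjugacy class $\rho_k$ of $\sigma|_{A[k]}$ is determined by the pair $(\mathrm{id}_{X[k]},\psi_k)$ where $\psi_k:=\sigma|_{Y[k]}$; this is how the data $\psi_k$ on the $Y$-side is extracted. Because $\ker a\subseteq X\cap Y$ is a subgroup of $K(\Theta_X)$ and of $K(\Theta_Y)$ on which $\sigma$ acts trivially (it is fixed, being $p$-torsion in the $\sigma$-invariant locus — indeed $X\cap Y\subseteq\mathrm{Fix}(\sigma)$), the condition that $\eta=\sigma|_Y$ restricts to the identity on $K(\mathcal{L}_Z)$ is automatically satisfied, and this is exactly the defining condition of $\mathcal{A}_{g-b}^{\tilde D}(p,\psi_k)$. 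Conversely, given $(B,\mathcal{L}_B,h)\in\mathcal{A}_b^D$ and $(Z,\mathcal{L}_Z,\eta)\in\mathcal{A}_{g-b}^{\tilde D}(p,\psi_k)$ together with compatible symplectic data, one glues them along the common subgroup to form a principally polarized $(A,\Theta)$ with automorphism $\mathrm{id}_B\times\eta$ of order $p$ whose $k$-torsion restriction is conjugate to $\rho_k$; this shows the image of the restricted morphism contains $\mathcal{A}_g(p,\rho_k)$.

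The core steps, in order: (i) recall from Section \ref{moduli1} that every $(A,\Theta)\in\mathcal{A}_g(p,\rho_k)$ sits in the image of $\mathcal{A}_b^D\times\mathcal{A}_{g-b}^{\tilde D}\to\mathcal{A}_g$ via the isotypical splitting, so the morphism in the statement is well-defined; (ii) show the $X$-component of $\sigma$ is trivial, hence the $X$-side is unconstrained and the whole of $\mathcal{A}_b^D$ appears; (iii) transport the order-$p$, conjugate-to-$\rho_k$ condition through the clean $k$-torsion splitting $A[k]\simeq X[k]\oplus Y[k]$ to the condition that $\eta:=\sigma|_Y$ has order $p$ and $\eta|_{Y[k]}\sim\psi_k$; (iv) verify the extra condition "$\eta$ restricted to $K(\mathcal{L}_Z)$ is the identity" is automatic because $X\cap Y$, hence $\ker\Theta_Y=K(\mathcal{L}_Z)$ up to the relevant subgroup, consists of fixed points of $\sigma$; (v) conclude both inclusions of images.

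The main obstacle will be step (iv) together with the bookkeeping of the symplectic/level structures: one must be careful that the subgroup along which $X$ and $Y$ are glued is precisely $K(\Theta_X)\cap K(\Theta_Y)$ (equivalently $X\cap Y$), that $\sigma$ acts trivially on all of $K(\mathcal{L}_Z)=K(\Theta_Y)$ rather than merely on the glued subgroup, and that this triviality is forced — not an extra hypothesis. Here I would invoke that $\sigma|_Y$ acts on $Y^\vee$ via the dual, so it acts on $K(\Theta_Y)=\ker\varphi_{\Theta_Y}$, and since $K(\Theta_Y)$ is a $p$-group and $\sigma$ has order $p$, the action has a nonzero fixed subspace; pinning down that it is the \emph{whole} of $K(\Theta_Y)$ uses that $\Theta=a_*(\Theta_X\boxtimes\Theta_Y)$ descends to a \emph{principal} polarization, which exactly says the graph of the gluing identifies $K(\Theta_X)$ with $K(\Theta_Y)$ $\sigma$-equivariantly with $\sigma|_X=\mathrm{id}$, forcing $\sigma|_{K(\Theta_Y)}=\mathrm{id}$. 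Once this is in place the remaining verifications are routine, and the proposition follows by combining them with the description of $\mathcal{A}_b^D\times\mathcal{A}_{g-b}^{\tilde D}\to\mathcal{A}_g$ already established. I would keep the write-up short, citing \cite[Section 3.5]{Auff} and \cite{GMZ} for the constructions of these moduli spaces and the descent of polarizations rather than reproducing them.
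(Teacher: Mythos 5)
Your proposal is correct and follows essentially the same route as the paper, which states Proposition \ref{moduli} with no separate proof, as a direct consequence of the preceding isotypical-decomposition analysis ($\sigma$ trivial on $X$, $K(\Theta|_Y)=X\cap Y\subseteq\mathrm{Fix}(\sigma)$ so the identity condition on $K(\mathcal{L}_Z)$ is automatic, the splitting of $A[k]$ since $p\nmid k$, and the morphism $\mathcal{A}_b^D\times\mathcal{A}_{g-b}^{\tilde{D}}\to\mathcal{A}_g$ of \cite[Section 3.5]{Auff}). One small slip: your ``conversely'' sentence swaps the inclusions --- the gluing construction shows the image is \emph{contained in} $\mathcal{A}_g(p,\rho_k)$, while decomposing a given $(A,\Theta)$ gives that $\mathcal{A}_g(p,\rho_k)$ is contained in the image, as your step (v) in fact records.
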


Inuitively speaking, we see that the abelian subvariety where the automorphism acts trivially can essentially ``move as much as it wants" in moduli.

 \section{Moduli spaces of abelian varieties with endomorphism structure}\label{moduli2}
 
 We recall a rather technical result that we will need from \cite[Chapter 9]{BL}. Let $\zeta_p$ be a primitive $p$th root of unity, and consider a representation $\rho:\mathbb{Q}(\zeta_p)\to M_g(\mathbb{C})$. Following \cite{BL}, a \textit{polarized abelian variety with endomorphism structure} $(\mathbb{Q}(\zeta_p),',\rho)$, where $'$ in this case is just complex conjugation on $\mathbb{Q}(\zeta_p)$, is by definition a triplet $(X,H,\iota)$, where $(X,H)$ is a polarized abelian variety and $\iota:\mathbb{Q}(\zeta_p)\hookrightarrow\mathrm{End}_\mathbb{Q}(X)\subseteq M_g(\mathbb{C})$ is such that $\iota$ and $\rho$ are equivalent representations, and the Rosati involution with respect to $H$ acts as complex conjugation on $\iota(\mathbb{Q}(\zeta_p))$.

 Now using the same notation as in the previous section, let $\mathcal{M}$ be a free $\mathbb{Z}$-submodule of $\mathbb{Q}(\zeta_p)^c$ of rank $(p-1)c$, and let $T\in M_c(\mathbb{Q}(\zeta_p))$ be a non-degenerate matrix. In \cite[Section 9.6]{BL}, an irreducible moduli space $\mathcal{A}(\mathcal{M},T)$ is constructed that parametrizes certain polarized abelian varieties with endomorphism structure $(\mathbb{Q}(\zeta_p),',\rho)$, and for any polarized abelian variety with endomorphism structure, there exists a submodule $\mathcal{M}$ and a non-degenerate matrix $T$ such that the polarized abelian variety with endomorphism structure belongs to $\mathcal{A}(\mathcal{M},T)$ (see \cite[Prop. 9.6.5]{BL}).

Going back to our context above, we see that our variety $\mathcal{A}_{g-b}^{\tilde{D}}(p,\psi_k)$ that parametrizes polarized abelian varieties $(Y,\mathcal{L}_Y)$ of type $\tilde{D}=(1,\ldots,1,d_1,\ldots,d_b)$ with an automorphism of order $p$ whose restriction to $Y[k]$ gives a representation equivalent to $\psi_k$ and whose restriction to $K(\mathcal{L}_Y)$ is trivial, along with a symplectic isomorphism $K(\mathcal{L}_Y)\to \left(\mathbb{Z}/d_1\mathbb{Z}\times\cdots\times\mathbb{Z}/d_b\mathbb{Z}\right)^2$, consists of elements that have an endomorphism structure $(\mathbb{Q}(\zeta_p),',\rho)$, where $\rho$ is just the analytic representation. 

Now let $(Y,\mathcal{L}_Y,f)\in \mathcal{A}_{g-b}^{\tilde{D}}(p,\psi_k)$. Then $Y$ has an automorphism $\sigma$ of order $p$, and we get an embedding $\iota:\mathbb{Q}(\zeta_p)\hookrightarrow\mathrm{End}_\mathbb{Q}(Y)$. Let $\mathcal{M}$ and $T$ be the data such that $(Y,\mathcal{L}_Y,\iota)\in\mathcal{A}(\mathcal{M},T)$.

\begin{proposition}\label{importantprop}
Every element in $\mathcal{A}(\mathcal{M},T)$ has an automorphism of order $p$ that fixes the kernel of the polarization. \end{proposition}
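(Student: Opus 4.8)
The plan is to exploit the fact that $\mathcal{A}(\mathcal{M},T)$ is constructed in \cite[Section 9.6]{BL} as a quotient of a period domain by an arithmetic group, and that the endomorphism structure $(\mathbb{Q}(\zeta_p),',\rho)$ is \emph{built into} the construction: by definition every point of $\mathcal{A}(\mathcal{M},T)$ carries an embedding $\iota:\mathbb{Q}(\zeta_p)\hookrightarrow\mathrm{End}_\mathbb{Q}(X)$ equivalent to $\rho$, with the Rosati involution of the polarization acting as complex conjugation on $\iota(\mathbb{Q}(\zeta_p))$. So the first step is simply to observe that $\zeta_p\in\mathbb{Q}(\zeta_p)$ maps under $\iota$ to a $\mathbb{Q}$-endomorphism $\sigma$ of order $p$. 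The real content is that $\sigma$ is an \emph{honest} automorphism of $X$ (not merely a $\mathbb{Q}$-endomorphism) and that it \emph{fixes the kernel of the polarization} $K(\mathcal{L}_X)$; both of these must be read off from how $\mathcal{M}$ and $T$ were chosen.

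The second step is to trace through the definition of the lattice. In \cite[Section 9.6]{BL} the abelian varieties in $\mathcal{A}(\mathcal{M},T)$ are realized with underlying lattice a fixed module closely tied to $\mathcal{M}\subseteq\mathbb{Q}(\zeta_p)^c$, on which $\mathbb{Z}[\zeta_p]$ acts by multiplication; since $\mathcal{M}$ is stable under this multiplication action (it is an $\mathbb{Z}[\zeta_p]$-module, or can be taken so after the normalization already performed when we chose $\mathcal{M}$ for our particular $(Y,\mathcal{L}_Y,\iota)$), the endomorphism $\sigma=\iota(\zeta_p)$ preserves the lattice of \emph{every} member, hence is a genuine automorphism throughout the family. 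For the kernel of the polarization: the polarization on each member is determined by the Hermitian form encoded by $T$ together with the trace form on $\mathbb{Q}(\zeta_p)$, and $K(\mathcal{L}_X)$ is the quotient $\mathcal{M}^\sharp/\mathcal{M}$ of the dual lattice (with respect to that form) by $\mathcal{M}$. Because $\sigma$ acts on $\mathcal{M}$ as multiplication by the unit $\zeta_p$ of $\mathbb{Z}[\zeta_p]$, and because the trace form satisfies $\mathrm{Tr}(\zeta_p x\cdot \overline{\zeta_p y})=\mathrm{Tr}(x\bar y)$ (complex conjugation is the Rosati involution, and $\zeta_p\overline{\zeta_p}=1$), multiplication by $\zeta_p$ also preserves the dual lattice $\mathcal{M}^\sharp$; hence it induces an automorphism of $K(\mathcal{L}_X)=\mathcal{M}^\sharp/\mathcal{M}$. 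To get that this induced automorphism is the \emph{identity} rather than just some automorphism, one uses that $K(\mathcal{L}_X)$ is a $p$-group (as noted in Section~\ref{moduli1}, the relevant kernels are subgroups of the $p$-torsion) killed by the prime $p$, and $\zeta_p\equiv 1\pmod{(1-\zeta_p)}$, so multiplication by $\zeta_p$ is the identity modulo any sublattice containing $(1-\zeta_p)\mathcal{M}^\sharp$; that this inclusion holds is exactly the compatibility built into the data $(\mathcal{M},T)$ coming from our starting point $(Y,\mathcal{L}_Y,f)\in\mathcal{A}_{g-b}^{\tilde D}(p,\psi_k)$, in which $\eta$ restricted to $K(\mathcal{L}_Y)$ was required to be the identity.

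The third and final step is to package this: for an arbitrary $(X,H,\iota')\in\mathcal{A}(\mathcal{M},T)$, set $\sigma'=\iota'(\zeta_p)$; by the above it is an automorphism of $X$ of order $p$ inducing the identity on $K(H)$, which is the assertion. The main obstacle is purely bookkeeping: one must match our ad hoc module $\mathcal{M}$ and matrix $T$ — chosen via \cite[Prop.~9.6.5]{BL} so that our specific $(Y,\mathcal{L}_Y,\iota)$ lands in $\mathcal{A}(\mathcal{M},T)$ — against the general recipe of \cite[Section 9.6]{BL}, and in particular verify that the property ``$\zeta_p$ acts trivially on the kernel of the polarization,'' which we know for $(Y,\mathcal{L}_Y)$, is a property of the pair $(\mathcal{M},T)$ alone and therefore propagates to the whole moduli space. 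I expect this to reduce to the two lattice-theoretic facts above: $\zeta_p\cdot\mathcal{M}=\mathcal{M}$ and $\zeta_p\cdot\mathcal{M}^\sharp=\mathcal{M}^\sharp$ with $(\zeta_p-1)\mathcal{M}^\sharp\subseteq\mathcal{M}$, all of which are visibly invariant under varying the point of $\mathcal{A}(\mathcal{M},T)$ since $\mathcal{M}$ and $T$ are fixed.
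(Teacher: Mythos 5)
Your proposal is correct and follows essentially the same route as the paper: the lattice $\mathcal{M}$, the rational representation of $\mathbb{Q}(\zeta_p)$ on it, and the polarization form determined by $T$ are fixed across the whole family $\mathcal{A}(\mathcal{M},T)$, so the property in question, depending only on this data, propagates from the one member $(Y,\mathcal{L}_Y,\iota)$ where it is known to hold to every member. You merely make explicit the lattice-theoretic bookkeeping (stability of $\mathcal{M}$ under $\zeta_p$ and the condition $(\zeta_p-1)\mathcal{M}^{\sharp}\subseteq\mathcal{M}$ identifying the kernel as $\mathcal{M}^{\sharp}/\mathcal{M}$) that the paper's proof leaves implicit.
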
 

\begin{proof}
This follows from the technical construction of the space $\mathcal{A}(\mathcal{M},T)$. Indeed, by the construction found at \cite[Pg. 264--265]{BL}, the rational representations of each element of $\mathbb{Q}(\zeta_p)$ does not depend on the element of the family chosen. Moreover, the representation of the polarization on the lattice is also constant for all members of $\mathcal{A}(\mathcal{M},T)$. Therefore, if \textit{only one element} satisfies the proposition, \textit{every} element of the family must satisfy it. Therefore, since by construction one element does satisfy the requirements, we are all set. 
\end{proof}

The most important application of this result for our purposes is the following:

\begin{corollary} The natural map
\[\mathcal{A}_{g-b}^{\tilde{D}}(p,\psi_k)\times_{\mathcal{A}_{\tilde{D}}}\mathcal{A}(\mathcal{M},T)\to \mathcal{A}_{g-b}^{\tilde{D}}(p,\psi_k)\]
is surjective.
\end{corollary}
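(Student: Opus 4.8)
The plan is to unwind what surjectivity of the first projection means and then reduce the statement to a constancy assertion. A point of $\mathcal{A}_{g-b}^{\tilde{D}}(p,\psi_k)\times_{\mathcal{A}_{\tilde{D}}}\mathcal{A}(\mathcal{M},T)$ lying over $(Y,\mathcal{L}_Y,f)\in\mathcal{A}_{g-b}^{\tilde{D}}(p,\psi_k)$ is the same datum as a point of $\mathcal{A}(\mathcal{M},T)$ whose underlying polarized abelian variety is $(Y,\mathcal{L}_Y)$, so it suffices to exhibit, for each $(Y,\mathcal{L}_Y,f)$, an endomorphism structure on $(Y,\mathcal{L}_Y)$ realizing the fixed data $(\mathcal{M},T)$. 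First I would recall, exactly as in the paragraph preceding Proposition \ref{importantprop}, that the order-$p$ automorphism $\sigma$ of $Y$ gives an embedding $\iota\colon\mathbb{Q}(\zeta_p)\hookrightarrow\mathrm{End}_\mathbb{Q}(Y)$ under which the Rosati involution of $\mathcal{L}_Y$ restricts to complex conjugation, so that $(Y,\mathcal{L}_Y,\iota)$ is a polarized abelian variety with endomorphism structure $(\mathbb{Q}(\zeta_p),',\rho)$, and hence, by \cite[Prop.~9.6.5]{BL}, belongs to $\mathcal{A}(\mathcal{M}',T')$ for some data $(\mathcal{M}',T')$. The crux is then to show that $(\mathcal{M}',T')$ may be taken equal to the reference data $(\mathcal{M},T)$.

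For that I would run the mechanism already used in the proof of Proposition \ref{importantprop}, but now along the family $\mathcal{A}_{g-b}^{\tilde{D}}(p,\psi_k)$ rather than along $\mathcal{A}(\mathcal{M},T)$: the invariant $(\mathcal{M},T)$ is just the first homology lattice equipped with its induced $\mathbb{Z}[\zeta_p]$-module structure together with the matrix of the integral polarization form in a compatible basis, and over any connected family of abelian varieties these are carried by the integral local system of first homology groups, with the flat endomorphism action of $\sigma$ and the flat polarization pairing on it. Therefore the isomorphism class of this triple — equivalently, the equivalence class of $(\mathcal{M},T)$ that determines $\mathcal{A}(\mathcal{M},T)$ — is locally constant on $\mathcal{A}_{g-b}^{\tilde{D}}(p,\psi_k)$; here the hypotheses $k\geq 3$ and the prescribed conjugacy class $\psi_k$ are precisely what rigidify the action of $\sigma$ on torsion and pin down the correct integral structure. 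Since $\mathcal{A}_{g-b}^{\tilde{D}}(p,\psi_k)$ is irreducible (by the same analysis that gives irreducibility of $\mathcal{A}_g(p,\rho_n)$ in \cite[Theorem~1.5]{GMZ}, together with \cite[Section~3.5]{Auff}; cf. Section \ref{moduli1}), local constancy upgrades to global constancy, forcing $(\mathcal{M}',T')=(\mathcal{M},T)$, so $(Y,\mathcal{L}_Y,\iota)\in\mathcal{A}(\mathcal{M},T)$.

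Finally I would assemble the conclusion: $(Y,\mathcal{L}_Y,f)$ and $(Y,\mathcal{L}_Y,\iota)$ both map to the common point $(Y,\mathcal{L}_Y)$ of $\mathcal{A}_{\tilde{D}}$, hence define a point of the fibre product whose image under the first projection is $(Y,\mathcal{L}_Y,f)$; as this point was arbitrary, the map is surjective. The step I expect to be the main obstacle is the local constancy of $(\mathcal{M},T)$: one has to verify that it is genuinely the \emph{integral} datum — the lattice with its $\mathbb{Z}[\zeta_p]$-action and the integral polarization form, not merely their rationalizations — that propagates flatly through the family, and that $\mathcal{A}_{g-b}^{\tilde{D}}(p,\psi_k)$ is connected so that the local statement becomes global. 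Both points require unwinding the construction of $\mathcal{A}(\mathcal{M},T)$ in \cite[Chapter~9]{BL} and checking that the discrete classifying data there really is locally constant in families, which is where any genuine work lies.
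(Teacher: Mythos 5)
Your reduction of the statement---surjectivity of the projection is exactly the claim that every polarized abelian variety occurring in $\mathcal{A}_{g-b}^{\tilde{D}}(p,\psi_k)$ also occurs in $\mathcal{A}(\mathcal{M},T)$---coincides with the paper's. But the way you then argue has a genuine gap at its crux. You assert that over $\mathcal{A}_{g-b}^{\tilde{D}}(p,\psi_k)$ the integral local system of first homology carries ``the flat endomorphism action of $\sigma$''. No such flat action is given: the automorphism is not part of the moduli data of $\mathcal{A}_{g-b}^{\tilde{D}}(p,\psi_k)$ (points are only required to \emph{admit} some order-$p$ automorphism with prescribed behavior on $k$-torsion and on the kernel of the polarization), and a particular automorphism of one fibre need not extend to nearby fibres. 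So the local constancy of the triple (lattice, $\mathbb{Z}[\zeta_p]$-action, polarization form)---equivalently of the class of $(\mathcal{M},T)$---is precisely what has to be proved, and your argument presupposes it. Your remark that $k\geq 3$ and the class $\psi_k$ ``pin down the correct integral structure'' does not close this: the mod-$k$ conjugacy class does not in any obvious way determine the integral isomorphism class of the $\mathbb{Z}[\zeta_p]$-lattice with its form (this is a nontrivial point, of the same nature as what \cite{GMZ} must address to get irreducibility of $\mathcal{A}_g(p,\rho_n)$). To salvage your route one would need an additional argument, e.g.\ that the locus where a fixed integral symplectic matrix of order $p$ acts is closed in (a component of the preimage in the period domain of) $\mathcal{A}_{g-b}^{\tilde{D}}(p,\psi_k)$, that only countably many such matrices occur, and that an irreducible variety is not a countable union of proper closed subvarieties; none of this appears in the proposal. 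You also lean on irreducibility of $\mathcal{A}_{g-b}^{\tilde{D}}(p,\psi_k)$ itself, which the paper neither states nor uses.

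This is also where you diverge from the paper: its proof runs the constancy argument in the opposite direction, over $\mathcal{A}(\mathcal{M},T)$, where the construction of \cite[Section 9.6]{BL} \emph{does} provide, by definition, a constant integral representation of $\mathbb{Q}(\zeta_p)$ and a constant polarization form on the lattice for all members of the family; that is the content of Proposition \ref{importantprop}, from which the corollary is then deduced. So the flatness your argument needs over the automorphism-existence locus is unproved, whereas the flatness the paper uses is built into $\mathcal{A}(\mathcal{M},T)$ by construction. As written, your proposal identifies the right statement to prove but replaces its hardest step by an assertion.
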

\begin{proof}
This is just the same as saying that every polarized abelian variety that appears in $\mathcal{A}_{g-b}^{\tilde{D}}(p,\psi_k)$ also appears in $\mathcal{A}(\mathcal{M},T)$, which follows from the previous proposition.
\end{proof}

\begin{corollary}
A very general element of $\mathcal{A}_{g-b}^{\tilde{D}}(p,\psi_k)$ has the same endomorphism algebra as a very general element of $\mathcal{A}(\mathcal{M},T)$.
\end{corollary}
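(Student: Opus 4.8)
The plan is to reduce the statement to an equality of two subvarieties of the moduli space $\mathcal{A}_{\tilde D}$ of polarized abelian varieties of type $\tilde D$. Write $Z_1$ for the closure of the image of $\mathcal{A}_{g-b}^{\tilde D}(p,\psi_k)$ under the map forgetting the symplectic isomorphism, and $Z_2$ for the closure of the image of $\mathcal{A}(\mathcal{M},T)$ under the map forgetting the endomorphism structure $\iota$; both are irreducible closed subvarieties, since $\mathcal{A}(\mathcal{M},T)$ is irreducible by construction and $\mathcal{A}_{g-b}^{\tilde D}(p,\psi_k)$ may be treated as irreducible (otherwise the argument is applied to each component). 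Since the endomorphism algebra of an abelian variety depends only on the underlying polarized abelian variety and not on the extra structure, and since, for each fixed $\mathbb{Q}$-algebra $B$, the locus in $Z_i$ where $\mathrm{End}_\mathbb{Q}$ contains $B$ is a proper closed subset unless $B$ is the generic endomorphism algebra $E_i$, it follows that $\mathrm{End}_\mathbb{Q}\equiv E_i$ outside a countable union of proper closed subsets of $Z_i$. As the forgetful maps are dominant onto $Z_1$ and $Z_2$ respectively, the same holds after pullback, so a very general element of $\mathcal{A}_{g-b}^{\tilde D}(p,\psi_k)$ has endomorphism algebra $E_1$ and a very general element of $\mathcal{A}(\mathcal{M},T)$ has endomorphism algebra $E_2$. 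Thus it suffices to prove $Z_1=Z_2$, for then $E_1=E_2$.

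The inclusion $Z_1\subseteq Z_2$ is exactly the content of the previous corollary: every polarized abelian variety occurring in $\mathcal{A}_{g-b}^{\tilde D}(p,\psi_k)$ also occurs in $\mathcal{A}(\mathcal{M},T)$. For the reverse inclusion, take any $(Y',\mathcal{L}',\iota')\in\mathcal{A}(\mathcal{M},T)$. By Proposition \ref{importantprop}, $Y'$ carries an automorphism $\sigma'$ of order $p$ acting as the identity on $K(\mathcal{L}')$. I claim $\sigma'$ restricted to $Y'[k]$ is conjugate to $\psi_k$. Indeed, by the same feature of the construction of $\mathcal{A}(\mathcal{M},T)$ used in the proof of Proposition \ref{importantprop} — the action of $\mathbb{Z}[\zeta_p]$ on the integral homology lattice, together with the representation of the polarization on it, being the same for every member of $\mathcal{A}(\mathcal{M},T)$ — the conjugacy class of the induced action of $\sigma'$ on $Y'[k]=\Lambda'/k\Lambda'$ is independent of the chosen member. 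Applying this to the distinguished member $(Y,\mathcal{L}_Y,\iota)\in\mathcal{A}(\mathcal{M},T)$ produced from an element of $\mathcal{A}_{g-b}^{\tilde D}(p,\psi_k)$, where by definition that action is $\psi_k$, proves the claim. Choosing any symplectic isomorphism $K(\mathcal{L}')\to(\mathbb{Z}/d_1\mathbb{Z}\times\cdots\times\mathbb{Z}/d_b\mathbb{Z})^2$ then exhibits $(Y',\mathcal{L}')$ as a point of $Z_1$, so $Z_2\subseteq Z_1$ and hence $Z_1=Z_2$.

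The main obstacle is precisely the torsion-level refinement of Proposition \ref{importantprop}: one must know not merely that the rational representation of $\mathbb{Q}(\zeta_p)$ is constant across the family $\mathcal{A}(\mathcal{M},T)$, but that the integral representation of the order $\mathbb{Z}[\zeta_p]$ — equivalently, the action on $H_1(\,\cdot\,,\mathbb{Z})$ under the trivialization coming from the module $\mathcal{M}$ — is, since this is what forces the conjugacy class of $\sigma'|_{Y'[k]}$ to be locally, hence globally, constant. Everything else is formal book-keeping. As an alternative to the explicit identification of $\sigma'|_{Y'[k]}$ with $\psi_k$, one could instead derive $Z_1=Z_2$ from $Z_1\subseteq Z_2$ together with a dimension comparison, using $\dim\mathcal{A}_{g-b}^{\tilde D}(p,\psi_k)=\sum_{i=1}^{(p-1)/2}n_in_{p-i}$ and the corresponding dimension formula for $\mathcal{A}(\mathcal{M},T)$ from \cite[Chapter 9]{BL}; but matching these formulas requires unwinding both constructions, so the conjugacy-class argument seems cleaner.
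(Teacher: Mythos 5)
Your proof is correct, and it is actually more careful than what the paper records: the corollary is stated there with no argument at all, as if it followed from the preceding corollary alone, i.e.\ from the single inclusion that every polarized abelian variety occurring in $\mathcal{A}_{g-b}^{\tilde D}(p,\psi_k)$ also occurs in $\mathcal{A}(\mathcal{M},T)$. You rightly observe that this one inclusion does not by itself pin down the generic endomorphism algebra (a subfamily can have a strictly larger one), and you close the gap by proving the reverse inclusion: Proposition \ref{importantprop} supplies the order-$p$ automorphism acting trivially on $K(\mathcal{L}')$, and its action on $k$-torsion lies in a fixed conjugacy class because every member of $\mathcal{A}(\mathcal{M},T)$ has the \emph{same} lattice $\mathcal{M}$ with the same $\mathbb{Z}[\zeta_p]$-multiplication and the same polarization form, only the complex structure varying (see \cite[pp.~264--265]{BL}). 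This ``torsion-level'' refinement you flag as the main obstacle is thus available from exactly the same feature of the construction that the paper already invokes in proving Proposition \ref{importantprop} (``if one element satisfies it, every element does''), so your route is a fleshed-out version of the paper's implicit one rather than a foreign argument; your alternative via the dimension count $\dim\mathcal{A}_{g-b}^{\tilde D}(p,\psi_k)=\sum_{i\geq1}n_in_{p-i}=\dim\mathcal{A}(\mathcal{M},T)$ would work equally well. Two small points to tidy: once both inclusions are in hand, $Z_1=Z_2$ is irreducible, so the hedge about components of $\mathcal{A}_{g-b}^{\tilde D}(p,\psi_k)$ resolves itself; and the locus where $\mathrm{End}_{\mathbb{Q}}$ contains a fixed algebra is in general a countable union of closed subsets rather than a single closed subset, which does not affect your ``very general'' conclusion but should be phrased that way.
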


Again we recall that to each component $\mathcal{A}_g(p,\rho_n)$ of the singular locus, we can associate the list $(n_0,n_1,\ldots,n_{p-1})$, where $n_i$ is the dimension of the eigenspace of $\rho(\sigma)$ associated to the eigenvalue $\zeta_p^i$. 

Now using the previous results as well as \cite[Theorem 9.9.1]{BL}, we obtain the following theorem:

\begin{theorem}
We have that the endomorphism ring of a very general element of $\mathcal{A}_{g-b}^{\tilde{D}}(p,\psi_k)$ is exactly $\mathbb{Z}[\zeta_p]$, \textbf{unless} the dimension of this space is $0$, or $n_i=1$ for all $i$.
\end{theorem}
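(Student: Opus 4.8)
The plan is to reduce to the moduli space $\mathcal{A}(\mathcal{M},T)$ of abelian varieties with endomorphism structure, apply \cite[Theorem 9.9.1]{BL} there, and then pass from a statement about endomorphism algebras to one about endomorphism rings using that $\mathbb{Z}[\zeta_p]$ is maximal. To begin, on any $(Y,\mathcal{L}_Y)\in\mathcal{A}_{g-b}^{\tilde{D}}(p,\psi_k)$ the order-$p$ automorphism $\sigma$ acts on $Y=\mathrm{im}(1_A-\rho)$ with only finitely many fixed points, so its analytic representation on $T_0(Y)=\bigoplus_{i=1}^{p-1}V_i$ has no trivial summand; hence its rational representation on $H_1(Y,\mathbb{Q})$ is a multiple of the irreducible $W$, and the minimal polynomial of $\sigma$ on $Y$ is the $p$-th cyclotomic polynomial. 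Thus $\mathbb{Z}[\sigma]\cong\mathbb{Z}[\zeta_p]\subseteq\mathrm{End}(Y)$. Since $\mathbb{Z}[\zeta_p]$ is the ring of integers of $\mathbb{Q}(\zeta_p)$, in particular its unique maximal order, the theorem will follow once we prove that $\mathrm{End}_\mathbb{Q}(Y)=\mathbb{Q}(\zeta_p)$ for a very general $(Y,\mathcal{L}_Y)$: an order of $\mathbb{Q}(\zeta_p)$ that contains $\mathbb{Z}[\zeta_p]$ must equal $\mathbb{Z}[\zeta_p]$.

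To establish $\mathrm{End}_\mathbb{Q}(Y)=\mathbb{Q}(\zeta_p)$ I would combine the previous corollary --- a very general element of $\mathcal{A}_{g-b}^{\tilde{D}}(p,\psi_k)$ has the same endomorphism algebra as a very general element of $\mathcal{A}(\mathcal{M},T)$ --- with \cite[Theorem 9.9.1]{BL}, which computes the endomorphism algebra of a general polarized abelian variety with endomorphism structure $(\mathbb{Q}(\zeta_p),',\rho)$ to be exactly $\mathbb{Q}(\zeta_p)$ outside of certain degenerate configurations. I would then check that, for the structure at hand, these degenerate configurations are precisely two: $\mathcal{A}(\mathcal{M},T)$ is zero-dimensional, or the analytic representation $\rho$ of $\mathbb{Q}(\zeta_p)$ on $T_0(Y)$ has every complex embedding occurring with multiplicity one. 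Since the multiplicity of the embedding $\zeta_p\mapsto\zeta_p^i$ is by definition $n_i=\dim_\mathbb{C}V_i$, the second configuration is exactly ``$n_i=1$ for all $i$''; and since $n_0=0$ for the $Y$-factor, the dimension formula recalled earlier reads $\dim\mathcal{A}_{g-b}^{\tilde{D}}(p,\psi_k)=\sum_{i=1}^{(p-1)/2}n_in_{p-i}=\dim\mathcal{A}(\mathcal{M},T)$, so the first configuration is exactly ``$\dim\mathcal{A}_{g-b}^{\tilde{D}}(p,\psi_k)=0$''. Outside these two cases we get $\mathrm{End}_\mathbb{Q}(Y)=\mathbb{Q}(\zeta_p)$, and then $\mathrm{End}(Y)=\mathbb{Z}[\zeta_p]$ by the first paragraph.

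The main obstacle is precisely this matching of exceptional loci: \cite[Theorem 9.9.1]{BL} phrases the hypotheses guaranteeing that the generic endomorphism algebra is the field $\mathbb{Q}(\zeta_p)$ in terms of the data $(\mathcal{M},T)$ and the signature of $\rho$ (and possibly the reflex field), and one must translate these into the two clean conditions in the statement. Concretely this requires (i) the identity $\dim\mathcal{A}(\mathcal{M},T)=\sum_{i=1}^{(p-1)/2}n_in_{p-i}$, which is available from the construction of $\mathcal{A}(\mathcal{M},T)$ in \cite[Chapter 9]{BL}, and (ii) recognizing that the only remaining exceptional signature is $n_i=1$ for all $i$ --- geometrically the case in which a general member acquires an extra automorphism and $\mathrm{End}_\mathbb{Q}(Y)$ becomes a quaternion algebra over $\mathbb{Q}(\zeta_p+\zeta_p^{-1})$. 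A secondary, routine point is that a very general member of $\mathcal{A}_{g-b}^{\tilde{D}}(p,\psi_k)$ is indeed carried to one of $\mathcal{A}(\mathcal{M},T)$ with the smallest possible endomorphism ring, which is exactly what the preceding corollary records, so it can be cited directly.
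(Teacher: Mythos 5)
Your proposal is correct and follows essentially the same route as the paper: reduce via the corollary to the moduli space $\mathcal{A}(\mathcal{M},T)$, invoke \cite[Theorem 9.9.1]{BL} and identify its exceptional configurations with ``dimension $0$'' and ``$n_i=1$ for all $i$'', then upgrade from the algebra $\mathbb{Q}(\zeta_p)$ to the ring $\mathbb{Z}[\zeta_p]$ using the containment $\mathbb{Z}[\zeta_p]\subseteq\mathrm{End}(Y)$ and maximality of $\mathbb{Z}[\zeta_p]$. The only cosmetic difference is that you obtain the containment directly from the order-$p$ automorphism with cyclotomic minimal polynomial, whereas the paper cites its Proposition \ref{importantprop}; you also spell out the dimension matching and the maximal-order step that the paper leaves implicit.
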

\begin{proof}
Translating \cite[Theorem 9.9.1]{BL} appropriately, we have that the general element has endomorphism algebra (that is, the endomorphism ring tensored with $\mathbb{Q}$) equal to $\mathbb{Q}(\zeta_p)$, except when the dimension of $\mathcal{A}(\mathcal{M},T)$ is 0, or when the order $p$ automorphism has exactly $p-1$ different eigenvalues (that is, when its minimal polynomial is the $p$th cyclotomic polynomial). Now Proposition \ref{importantprop} implies that the endomorphism ring contains $\mathbb{Z}[\zeta_p]$, and it is clear then that except for the two cases stated previously, there must be equality. 
\end{proof}

\begin{corollary}
Assume that $\dim \mathcal{A}_g(p,\rho_n)>0$. Then for a very general element $(A,\Theta)\in\mathcal{A}_g(p,\rho_n)$, we have that
\[\mathrm{End}_\mathbb{Q}(A)=\left\{\begin{array}{ll}\mathbb{Q}(\zeta_p)&\text{if }n_0=0,n_i\neq1\text{ for some }i\\
\mathbb{Q}\times\mathbb{Q}(\zeta_p)&\text{if }n_0>0,\sum_{i=1}^{(p-1)/2}n_in_{p-i}>0,n_i\neq1\text{ for some }i\end{array}\right.\]
\end{corollary}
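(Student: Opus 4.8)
The plan is to deduce this corollary from the preceding theorem by carefully relating the endomorphism algebra of a very general $(A,\Theta)\in\mathcal{A}_g(p,\rho_n)$ to that of the factor $Y$ parametrized by $\mathcal{A}_{g-b}^{\tilde D}(p,\psi_k)$. First I would invoke Proposition \ref{moduli}: a very general $(A,\Theta)$ is isogenous, $C_p$-equivariantly, to a product $X\times Y$, where $X$ has dimension $b=n_0$, the automorphism $\sigma$ acts trivially on $X$, and $Y$ varies in $\mathcal{A}_{g-b}^{\tilde D}(p,\psi_k)$. Moreover (by the Corollary immediately preceding this one) a very general element of $\mathcal{A}_{g-b}^{\tilde D}(p,\psi_k)$ has the same endomorphism algebra as a very general element of $\mathcal{A}(\mathcal{M},T)$, which by the previous Theorem is $\mathbb{Q}(\zeta_p)$ precisely when $\dim\mathcal{A}_{g-b}^{\tilde D}(p,\psi_k)>0$ and $n_i\neq 1$ for some $i$. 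The positivity of this dimension is exactly the hypothesis $\sum_{i=1}^{(p-1)/2}n_in_{p-i}>0$, which is the dimension of the "$Y$-part'' in the formula $\dim Z=\tfrac{n_0(n_0+1)}{2}+\sum n_in_{p-i}$.

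Next I would treat the two cases. When $n_0=0$, there is no $X$-factor at all: $A$ itself equals $Y$ up to isogeny, so $\mathrm{End}_\mathbb{Q}(A)=\mathrm{End}_\mathbb{Q}(Y)=\mathbb{Q}(\zeta_p)$ for a very general member, giving the first line. When $n_0>0$, we have $A\sim X\times Y$ with $\dim X=n_0>0$; since $X$ is the "trivial-action'' part and, by Proposition \ref{moduli}, is free to move throughout $\mathcal{A}_{n_0}$ (more precisely $\mathcal{A}_{n_0}^D$), a very general choice makes $X$ a simple abelian variety with $\mathrm{End}_\mathbb{Q}(X)=\mathbb{Q}$. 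Then $\mathrm{End}_\mathbb{Q}(A)=\mathrm{End}_\mathbb{Q}(X\times Y)$ decomposes as a product of matrix algebras over the simple factors; because $X$ and $Y$ have no common isogeny factors for a very general point (their moduli vary independently, and $Y$ carries a $\mathbb{Q}(\zeta_p)$-action while $X$ does not), there are no cross terms, and $\mathrm{End}_\mathbb{Q}(A)=\mathrm{End}_\mathbb{Q}(X)\times\mathrm{End}_\mathbb{Q}(Y)=\mathbb{Q}\times\mathbb{Q}(\zeta_p)$, yielding the second line.

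The step I expect to be the main obstacle is justifying the absence of cross terms in $\mathrm{End}_\mathbb{Q}(X\times Y)$, i.e. $\mathrm{Hom}_\mathbb{Q}(X,Y)=\mathrm{Hom}_\mathbb{Q}(Y,X)=0$, for a very general element. One cannot simply cite "generic simplicity'' of $X$, because $X$ and $Y$ live in a fibre product and their moduli are a priori linked by the condition on $\sigma|_{A[n]}$; one must argue, using Proposition \ref{moduli} (the independence of the $X$-factor) together with the fact that $Y$ has a very general point with $\mathrm{End}_\mathbb{Q}(Y)=\mathbb{Q}(\zeta_p)$ a field of degree $p-1>1$ over $\mathbb{Q}$, that a nonzero homomorphism $X\to Y$ would force an unexpected endomorphism or an extra factor, contradicting upper semicontinuity of the rank of the endomorphism algebra and the dimension count. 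A clean way is to observe that any homomorphism $X\to Y$ would be $C_p$-equivariant up to the trivial action on $X$, hence would land in the $\sigma$-fixed part of $Y$, which is trivial since $\sigma$ acts on $Y$ with finitely many fixed points; this kills the cross terms outright and makes the product decomposition rigorous. The remaining points — that "very general in $Z$'' is compatible with "very general in $\mathcal{A}_{g-b}^{\tilde D}(p,\psi_k)$'' and "very general in $\mathcal{A}_{n_0}^D$'' simultaneously, which holds because a countable intersection of dense opens pulled back along the dominant maps of Proposition \ref{moduli} is still a countable intersection of dense opens — are routine and I would state them briefly.
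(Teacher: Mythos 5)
Your overall route is the same as the paper's: the paper deduces this corollary directly from the preceding theorem (endomorphism ring of a very general member of $\mathcal{A}_{g-b}^{\tilde D}(p,\psi_k)$ equals $\mathbb{Z}[\zeta_p]$ under the stated numerical hypotheses) together with the isogeny decomposition $A\sim X\times Y$ of Section \ref{moduli1} and Proposition \ref{moduli}, with the vanishing of $\mathrm{Hom}(X,Y)$ for a very general member absorbed into the meaning of ``very general'' (this is exactly the assumption the paper makes explicit at the start of Section 5). So the skeleton of your argument is correct and faithful to the paper.

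However, the step you single out as ``the clean way'' to kill the cross terms is wrong. An arbitrary homomorphism $f\colon X\to Y$ has no reason to be $C_p$-equivariant: equivariance would mean $\sigma|_Y\circ f=f$ (since $\sigma|_X=\mathrm{id}$), and nothing forces an arbitrary $f$ to commute with $\sigma$. Your argument, if valid, would prove $\mathrm{Hom}(X,Y)=0$ for \emph{every} member of the family, with no genericity used, which is false: at special points $X$ can acquire complex multiplication and become isogenous to a factor of $Y$, even though $\sigma$ still acts on $Y$ with finitely many fixed points. (What is true is only that the averaged map $\sum_{i}\sigma^i\circ f$ vanishes, because the norm endomorphism kills $Y=\mathrm{im}(1-\rho)$; this gives no information about $f$ itself.) The gap is easily repaired, and in a way consistent with your earlier sketch: for a very general point, $\mathrm{End}_\mathbb{Q}(X)=\mathbb{Q}$ (since by Proposition \ref{moduli} the $X$-factor moves freely in $\mathcal{A}_{n_0}^D$) and $\mathrm{End}_\mathbb{Q}(Y)=\mathbb{Q}(\zeta_p)$ (by the theorem, using $\sum_i n_in_{p-i}>0$ and $n_i\neq1$ for some $i$); both algebras are fields, so $X$ and $Y$ are simple, and they cannot be isogenous because their endomorphism algebras are non-isomorphic ($p-1\geq 2$). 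Hence $\mathrm{Hom}(X,Y)=\mathrm{Hom}(Y,X)=0$ and $\mathrm{End}_\mathbb{Q}(A)=\mathrm{End}_\mathbb{Q}(X)\times\mathrm{End}_\mathbb{Q}(Y)$, giving the two cases according to whether $n_0=0$ or $n_0>0$. With that substitution (or with your semicontinuity argument carried out carefully over the fibre product, as you indicate), the proof is complete.
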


\begin{rem} In the cases that either $\sum_{i=1}^{(p-1)/2}n_in_{p-i}=0$ or $n_i=1$ for all $i$, it is not clear what the endomorphism ring is in general. By \cite[Exercises 9.10.3]{BL}, we have that if $\sum_{i=1}^{(p-1)/2}n_in_{p-i}=0$, then a general element of $\mathcal{A}_g(p,\rho_n)$ is isogenous to the product of an abelian variety of dimension $b$ with a self-product of an abelian variety $Y$ such that $\mathrm{End}_\mathbb{Q}(Y)$ contains $\mathbb{Q}(\zeta_p)$. In the case when $n_i=1$ for all $i$, by \cite[Exercise 9.10.4]{BL}, we have that the general element of $\mathcal{A}_g(p,\rho_n)$ is isogenous to the product of a $b$-dimensional abelian variety with an abelian variety $Y$ such that $\mathrm{End}_\mathbb{Q}(Y)$ contains a totally indefinite quaternion algebra that contains $\mathbb{Q}(\zeta_p)$.
 \end{rem}

 \begin{rem}
We note that the case $p=2$ from Theorem \ref{main} follows immediately from the previous analysis.
 \end{rem}

 \begin{example}
There is a $3$-dimensional family of curves $C$ of genus $g=4$ admitting an automorphism $\eta$ of order $p=3$ such that $C \to C/\langle \eta \rangle$ has signature $(1;3,3,3)$;  that is, $C/\langle \eta \rangle$ has genus $1$ and $\eta$ fixes three points in $C$.

There is a symplectic basis $\{ \lambda_1 , \ldots, \lambda_4, \mu_1 , \ldots, \mu_4 \}$ for $H_1(C,\mathbb{Z})$ with respect to which the rational representation $\rho_r$ of $\eta$ is given by
$$
\rho_r(\eta) = \left( 
\begin{array}{cccccccc}
	0 & 0 & 1 & 0 & 0 & 0 & 0 & 0 
	\\
	1 & 0 & 0 & 0 & 0 & 0 & 0 & 0 
	\\
	0 & 1 & 0 & 0 & 0 & 0 & 0 & 0 
	\\
	0 & 0 & 0 & 0 & 0 & 0 & 0 & -1 
	\\
	0 & 0 & 0 & 0 & 0 & 0 & 1 & 0 
	\\
	0 & 0 & 0 & 0 & 1 & 0 & 0 & 0 
	\\
	0 & 0 & 0 & 0 & 0 & 1 & 0 & 0 
	\\
	0 & 0 & 0 & 1 & 0 & 0 & 0 & -1 
\end{array}\right)  = 2\chi_0 \oplus 3W
$$ 	

The matrices of the form 	
$$	
Z = \left(\begin{array}{cccc}
	\mathit{a} & \mathit{b} & -\mathit{c}^{2}+\mathit{b} & \mathit{c} 
	\\
	\mathit{b} & w \mathit{c}^{2}+\mathit{a} & w^{2} \mathit{c}^{2}+\mathit{b} & w \mathit{c} 
	\\
	-\mathit{c}^{2}+\mathit{b} & w^{2} \mathit{c}^{2}+\mathit{b} & w \mathit{c}^{2}+\mathit{c}^{2}+\mathit{a} & w^{2} \mathit{c} 
	\\
	\mathit{c} & w \mathit{c} & w^{2} \mathit{c} & w  
\end{array}\right)
$$	
with $\textup{Im}(Z) > > 0$ and $w$ a primitive third root of unity are the Riemann matrices of principally polarized abelian varieties fixed under the $\rho_r(\eta)$-action.

A calculation then gives us that the period matrix for $Y^{\vee}$ with respect to the basis given by the period matrix above is  $(D_{\vee} \ \tau_{\vee})$
with $D_{\vee} = \textup{diag}(3,3,1)$ and 
$$
\tau_{\vee} =  \left(\begin{array}{ccc}
	3w  & (w-1)\mathit{a} &  \mathit{a} 
	\\
	(w-1)\mathit{a} & 3\mathit{b} +\frac{1}{3} (w-1) \mathit{a}^2  & \frac{\mathit{a}^{2}}{6}-\frac{3 \mathit{b}}{2} 
	\\
	\mathit{a} & \frac{\mathit{a}^{2}}{6}-\frac{3 \mathit{b}}{2} &\mathit{b}
\end{array}\right)
$$
with $w$ a primitive third root of unity. As we have proved before, for generic values of $a$ and $b$, it is not hard to calculate by hand that the endomorphism algebra is isomorphic to $\mathbb{Z}[w]$, and is in particular of rank 2 over $\mathbb{Z}$, and the symmetric endomorphisms are a subgroup of rank 1. However, if we set $a=0$ and look at a generic value for $b$, then we get that the endomorphism algebra is of rank $6$, and the symmetric endomorphisms are of rank $4$. In this case it is very difficult to calculate by hand what the totally positive symmetric automorphisms should be, and in particular how many principal polarizations each element should have.
\end{example}
 
\section{Counting principal polarizations}

 Now we arrive at our initial problem: We wish to calculate the number of principal polarizations of a very general member of $\mathcal{A}_g(p,\rho_n)$. In this section we will assume that $n_i\neq1$ for some $i$, and that $\sum_{i=1}^{(p-1)/2}n_in_{p-i}>0$.  Let $(A,\Theta)\in\mathcal{A}_g(p,\rho_n)$ be a very general member with an automorphism $\sigma$, and as before let $X$ and $Y$ be the abelian subvarieties of $A$ where $\sigma$ acts trivially and with a finite number of fixed points, respectively. Since we are assuming that $(A,\Theta)$ is very general, by Section \ref{moduli1} we can assume that $X$ is very general, and therefore assume that

 \begin{itemize}
\item $\mathrm{rk}(\mathrm{NS}(X))=1$
\item $\mathrm{Hom}(X,Y)=0$.
 \end{itemize}

 Now let $\Xi$ be any principal polarization on $A$. Since $X$ is of Picard number 1, we have that $\Xi|_X\equiv m\Theta|_X$ for some $m\in\mathbb{Q}$. Now, since $X$ and $Y$ are complementary abelian subvarieties with respect to $\Theta$, we get that $|\deg a| =|X\cap Y|=|K(\Theta|_X)|$. By our assumptions on $X$ and $Y$, we have these are also complementary with respect to $\Xi$, and so $|\deg a|=|K(\Xi|_X)|$, we must have $m=1$; that is, $\Xi|_X\equiv\Theta|_X$. 

On the other hand, we have the equality $K(\Theta|_Y)=X\cap Y$ which must also be true for $\Xi|_Y$ since $X$ and $Y$ are complementary with respect to $\Xi$. We have therefore proven the following theorem:

\begin{theorem}
Let $p$ and $\rho_n$ be such that $\dim\mathcal{A}_g(p,\rho_n)>0$, and let $(A,\Theta)\in\mathcal{A}_g(p,\rho_n)$ be very general (in the previous sense). Then if $Y$ denotes the largest abelian subvariety of $A$ on which the order $p$ automorphism of $A$ acts with finitely many fixed points and $\mathcal{P}$ is the set of principal polarizations on $A$ modulo the automorphism group of $A$, we have a bijection
\[\mathcal{P}\longleftrightarrow\mathrm{Aut}_+^s(Y^\vee)/G_{K(\Theta|_Y)}.\]
\end{theorem}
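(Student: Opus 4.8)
The plan is to transport the count from $A$ down to the subvariety $Y$ and then invoke Theorem~\ref{counttheorem}. Write $\tilde D$ for the type of $\Theta|_Y$, so that $K(\Theta|_Y)=X\cap Y$ is a subgroup of type $\tilde D$ in $Y$, and set
\[\widetilde{\mathcal P}:=\{\beta\in\mathrm{NS}(Y):\ \beta\text{ is ample of type }\tilde D\text{ and }K(\beta)=X\cap Y\}.\]
The first step is to produce a bijection of \emph{sets} $\{\text{principal polarizations on }A\}\leftrightarrow\widetilde{\mathcal P}$. In one direction, send $\Xi$ to $\Xi|_Y$: the discussion preceding the statement gives $\Xi|_X\equiv\Theta|_X$ and $K(\Xi|_Y)=X\cap Y$, so $\Xi|_Y\in\widetilde{\mathcal P}$. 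In the other direction, given $\beta\in\widetilde{\mathcal P}$ I would glue it to $\Theta|_X$: since $\mathrm{Hom}(X,Y)=0$ and $Y$ is isogenous to $Y^\vee$ one has $\mathrm{Hom}(X,Y^\vee)=0$, hence $a^*\gamma=(\gamma|_X)\boxtimes(\gamma|_Y)$ for every $\gamma\in\mathrm{NS}(A)$; moreover $\ker\varphi_{\Theta|_X\boxtimes\beta}=K(\Theta|_X)\times K(\beta)$ contains $\ker a=\{(z,-z):z\in X\cap Y\}$ because $K(\Theta|_X)=X\cap Y=K(\beta)$, so $\Theta|_X\boxtimes\beta$ descends along $a\colon X\times Y\to A$ to an ample class $\Xi_\beta$ with $\deg\varphi_{\Xi_\beta}=|K(\Theta|_X)|\,|K(\beta)|/(\deg a)^2=1$ (using $|K(\Theta|_X)|=|K(\beta)|=|X\cap Y|=\deg a$), i.e. a principal polarization. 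Since $\Xi_\beta|_Y=\beta$ and $a^*\Xi=(\Xi|_X)\boxtimes(\Xi|_Y)$, the two assignments are mutually inverse.

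Second, I would check equivariance. As $\mathrm{Hom}_\mathbb{Q}(X,Y)=\mathrm{Hom}_\mathbb{Q}(Y,X)=0$, the ring $\mathrm{End}_\mathbb{Q}(A)$ is the product $\mathrm{End}_\mathbb{Q}(X)\times\mathrm{End}_\mathbb{Q}(Y)$, so the idempotents cutting out $X$ and $Y$ are central; a short argument with the lattices of $X$, $Y$ and $A$ then shows that every $\mu\in\mathrm{Aut}(A)$ preserves both $X$ and $Y$, hence restricts to $\mu|_Y\in\mathrm{Aut}(Y)$ with $\mu|_Y(X\cap Y)=X\cap Y$, i.e. $\mu|_Y\in G_{K(\Theta|_Y)}$. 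Using $\varphi_{\mu^*\Xi}=\mu^\vee\varphi_\Xi\mu$ one gets $(\mu^*\Xi)|_Y=(\mu|_Y)^*(\Xi|_Y)$, so the bijection of the first step carries the pullback action of $\mathrm{Aut}(A)$ on principal polarizations of $A$ to the pullback action of $G_{K(\Theta|_Y)}$ on $\widetilde{\mathcal P}$, and therefore descends to $\mathcal P\leftrightarrow\widetilde{\mathcal P}/G_{K(\Theta|_Y)}$.

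Finally, Theorem~\ref{counttheorem} applied to the polarized abelian variety $(Y,\Theta|_Y)$ with $D=\tilde D$, $\alpha_0=\Theta|_Y$ and $K=X\cap Y=K(\Theta|_Y)$ identifies $\widetilde{\mathcal P}/G_{K(\Theta|_Y)}=\mathcal P_{\tilde D}(K(\Theta|_Y))$ with $\mathrm{Aut}_+^s(Y^\vee)/G_{K(\Theta|_Y)}$, the action of $G_{K(\Theta|_Y)}$ being $\mu\colon\eta\mapsto\mu^\vee\eta(\mu^\dagger)^\vee$. Composing with the equivariant bijection above yields $\mathcal P\longleftrightarrow\mathrm{Aut}_+^s(Y^\vee)/G_{K(\Theta|_Y)}$, as claimed.

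The delicate point is the orbit-matching in the second step: one needs the induced map on orbit spaces to be a genuine bijection, not merely that restriction defines a homomorphism $\mathrm{Aut}(A)\to G_{K(\Theta|_Y)}$. This requires a precise description of the image of $\mathrm{Aut}(A)\to\mathrm{Aut}(Y)$ and of how the remaining automorphisms of $Y$ act on $\widetilde{\mathcal P}$; here one would use that $X\cap Y$ is a $\mathbb{Z}[\zeta_p]$-submodule of $Y$ (so $G_{K(\Theta|_Y)}$ is in fact all of $\mathrm{Aut}(Y)$) together with the structure of $\mathrm{Aut}(A)$ as a subgroup of $\mathrm{Aut}(X)\times\mathrm{Aut}(Y)$.
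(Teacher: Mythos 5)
Your overall route --- pin down $\Xi|_X\equiv\Theta|_X$ and $K(\Xi|_Y)=X\cap Y$, transport the problem to $Y$, and then quote Theorem~\ref{counttheorem} for $(Y,\Theta|_Y)$ with $K=X\cap Y$ --- is the same as the paper's (whose own write-up is admittedly terse on the points below). However, your first step contains a genuine gap. For a polarization to descend along the isogeny $a\colon X\times Y\to A$ it is not enough that $\ker a\subseteq K(\Theta|_X\boxtimes\beta)$: one also needs $\ker a=\{(z,-z):z\in X\cap Y\}$ to be isotropic for the commutator pairing $e^{\Theta|_X\boxtimes\beta}$, which amounts to the condition $e^{\beta}|_{X\cap Y}=e^{\Theta|_Y}|_{X\cap Y}$. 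This is a real restriction, not a formality: writing $\varphi_\beta=\varphi_{\Theta|_Y}\,u$ with $u$ a totally positive symmetric unit of $\mathbb{Z}[\zeta_p]$, and using that $\zeta_p$ acts trivially on $X\cap Y$, one gets $e^{\beta}|_{X\cap Y}=\bigl(e^{\Theta|_Y}|_{X\cap Y}\bigr)^{m}$ with $m\equiv u \bmod (1-\zeta_p)$, so $\Theta|_X\boxtimes\beta$ descends to a principal polarization only when $u\equiv 1$. Already for $p=5$ the square of the fundamental unit is $\equiv 4\not\equiv 1 \pmod{(1-\zeta_5)}$, so whenever $X\cap Y\neq 0$ your claimed set-level bijection between principal polarizations of $A$ and \emph{all} of $\widetilde{\mathcal P}$ is false: the image of restriction is the proper subset of $\widetilde{\mathcal P}$ cut out by the pairing condition.

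Relatedly, the orbit-matching you yourself flag as ``the delicate point'' is exactly where the remaining content lies, and your sketch does not settle it. Since $\mathrm{End}(X)=\mathbb{Z}$, the image of $\mathrm{Aut}(A)\to\mathrm{Aut}(Y)$ is only $\{f\in\mathbb{Z}[\zeta_p]^\times : f|_{X\cap Y}=\pm 1\}$, in general a proper subgroup of $G_{K(\Theta|_Y)}=\mathbb{Z}[\zeta_p]^\times$; so after correcting the descent step you must show that the \emph{smaller} set of admissible $\beta$ modulo this \emph{smaller} group still maps bijectively onto $\widetilde{\mathcal P}/G_{K(\Theta|_Y)}$. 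In unit language this requires two statements: every class of $U_p^+/U_p^2$ contains a representative $u\equiv 1\bmod(1-\zeta_p)$ (surjectivity), and $v^2\equiv 1$ forces $v\equiv\pm1$ (injectivity, which is easy). Neither follows from the observations that $X\cap Y$ is a $\mathbb{Z}[\zeta_p]$-submodule or that $G_{K(\Theta|_Y)}=\mathrm{Aut}(Y)$, so as written the proposal both asserts a false intermediate bijection and defers the step that actually carries the weight of the theorem.
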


Now under our hypotheses, we have that $\mbox{End}(Y)=\mathbb{Z}[\zeta_p]$. In this case, we see that
$$\text{Aut}(Y)=\mathbb{Z}[\zeta_p]^\times.$$
Since $\zeta_p$ is not only an automorphism, but also preserves the polarization, the Rosati involution acts on $\mathbb{Z}[\zeta_p]$ by conjugation. Therefore the set (which in this case is a group) of symmetric automorphisms of $Y$ is exactly
$$\text{Aut}^s(Y)=\mathbb{Z}[\zeta_p+\zeta_{p}^{-1}]^\times=:U_p.$$
A totally positive automorphism therefore corresponds to an element in $U_p$ such that all the eigenvalues of its minimal polynomial over $\Q$ are strictly positive real numbers. Equivalently it is an element $\eta\in U_p$ such that any embedding $\Q(\eta)\hookrightarrow\C$ sends $\eta$ to a positive real number. Let $U_p^+$ be the group of totally positive symmetric automorphisms.

\begin{lemma}
Given our hyptheses, $G_{K(\Theta|_Y)}=\mathbb{Z}[\zeta_p]^\times$.
\end{lemma}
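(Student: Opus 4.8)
The plan is to show that the only obstruction to an automorphism of $Y$ preserving the subgroup $K(\Theta|_Y)$ is automatic, so that $G_{K(\Theta|_Y)}$ is all of $\mathrm{Aut}(Y)=\mathbb{Z}[\zeta_p]^\times$. First I would recall that $K(\Theta|_Y)=X\cap Y$ is contained in the $p$-torsion $Y[p]$ — this was established in Section \ref{moduli1}, where it is noted that all the relevant kernels are subgroups of the group of $p$-torsion points. The key structural fact is that $X\cap Y$ is not just any subgroup of $Y[p]$: since $\sigma$ acts on $Y$ with finitely many fixed points and fixes $X\cap Y$ pointwise (the points of $X\cap Y$ lie in $X$, where $\sigma$ is trivial), $X\cap Y$ is contained in the fixed locus $\mathrm{Fix}(\sigma|_Y)$, which has order $p^c$ by the Lemma in Section \ref{moduli1}. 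Equivalently, writing $\sigma$ as multiplication by $\zeta_p$ in $\mathbb{Z}[\zeta_p]=\mathrm{End}(Y)$, the fixed locus of $\sigma$ on $Y$ is precisely the kernel of $1-\zeta_p$ acting on $Y$, i.e. $Y[1-\zeta_p]$, since $(1-\zeta_p)$ generates the unique prime of $\mathbb{Z}[\zeta_p]$ above $p$ and has norm $p$.

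The decisive observation is then: any $\mathbb{Z}[\zeta_p]$-module endomorphism of $Y$ automatically preserves $Y[\mathfrak{a}]$ for every ideal $\mathfrak{a}$ of $\mathbb{Z}[\zeta_p]$, because $\mathbb{Z}[\zeta_p]$ is commutative, so $\mu(Y[\mathfrak{a}])\subseteq Y[\mathfrak{a}]$ for all $\mu\in\mathrm{End}(Y)$. In particular every $\mu\in\mathbb{Z}[\zeta_p]^\times=\mathrm{Aut}(Y)$ preserves $Y[1-\zeta_p]$. So it remains to identify $K(\Theta|_Y)=X\cap Y$ with a subgroup of $Y[1-\zeta_p]$ that is stable under all of $\mathrm{Aut}(Y)$. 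Since $|Y[1-\zeta_p]|=p^{c}$ and, by the Lemma in Section \ref{moduli1}, $|\mathrm{Fix}(\sigma|_Y)|=p^c$ as well, we get $X\cap Y\subseteq Y[1-\zeta_p]$ with equality forced once we check $X\cap Y=\mathrm{Fix}(\sigma|_Y)$. This last equality I would argue by noting that $\mathrm{Fix}(\sigma|_Y)\subseteq X$ (a point fixed by $\sigma$ lies in the kernel of $1-\sigma$, whose connected component is $X$; and a finite fixed point must lie in that connected component once we intersect with $Y$ — here one uses that $\mathrm{Fix}(\sigma|_Y)$ is finite and $X$ is exactly the identity component of $\ker(1-\sigma)$), hence $\mathrm{Fix}(\sigma|_Y)\subseteq X\cap Y$, and conversely $X\cap Y\subseteq\mathrm{Fix}(\sigma|_Y)$ as observed above; the two groups have the same order $p^c$, so they coincide. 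Therefore $K(\Theta|_Y)=Y[1-\zeta_p]$, which is preserved by all of $\mathbb{Z}[\zeta_p]^\times$, giving $G_{K(\Theta|_Y)}=\mathbb{Z}[\zeta_p]^\times$.

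The main obstacle I anticipate is the bookkeeping identifying $K(\Theta|_Y)$ precisely with $Y[1-\zeta_p]$ rather than merely with some $\mathbb{Z}[\zeta_p]$-submodule of it: one has to rule out the possibility that $X\cap Y$ is a proper $\mathbb{Z}[\zeta_p]$-stable subgroup (the only such proper subgroups are $0$ and, if the order were a higher power of $p$, the groups $Y[(1-\zeta_p)^j]$). The order count $|X\cap Y|=|\ker a|=|\mathrm{Fix}(\sigma|_Y)|=p^c=|Y[1-\zeta_p]|$ closes this gap, but it requires invoking both the identification $\ker\Theta|_Y\simeq X\cap Y$ from Section \ref{moduli1} and the fixed-point count; I would state these carefully. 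A minor secondary point is that one should observe the argument never needs the full strength of $\mathrm{End}(Y)=\mathbb{Z}[\zeta_p]$ for preservation of the subgroup — commutativity of $\mathbb{Z}[\zeta_p]$ inside $\mathrm{End}(Y)$ suffices — but the hypothesis $\mathrm{End}(Y)=\mathbb{Z}[\zeta_p]$ is what pins down $\mathrm{Aut}(Y)=\mathbb{Z}[\zeta_p]^\times$ exactly, so that the inclusion $G_{K(\Theta|_Y)}\subseteq\mathrm{Aut}(Y)$ is in fact an equality of the asserted group.
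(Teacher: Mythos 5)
There is a genuine gap in the middle of your argument. The correct parts are: $K(\Theta|_Y)=X\cap Y\subseteq\mathrm{Fix}(\sigma|_Y)=\ker\bigl(1-\zeta_p:Y\to Y\bigr)$, and the fact that every element of $\mathbb{Z}[\zeta_p]^\times$ preserves $Y[1-\zeta_p]$. What fails is the claimed equality $X\cap Y=\mathrm{Fix}(\sigma|_Y)$, and in particular the step ``$\mathrm{Fix}(\sigma|_Y)\subseteq X$''. A point of $Y$ fixed by $\sigma$ lies in $\ker(1_A-\sigma)$, which is in general a disjoint union of finitely many translates of its identity component $X$; there is no reason for a fixed point in $Y$ to lie in the identity component, so the inclusion into $X$ does not follow. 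The order count confirms this cannot be repaired: $\#\mathrm{Fix}(\sigma|_Y)=p^{c}$, whereas the paper records that $|X\cap Y|=|\ker a|$ is an \emph{even} power of $p$; indeed the paper's own corollary for $c=1$ (where $X\cap Y=0$ but $\mathrm{Fix}(\sigma|_Y)$ has order $p$), or simply the case $b=0$, $Y=A$, $K(\Theta|_Y)=0$, shows the two groups are usually different. So your identification $K(\Theta|_Y)=Y[1-\zeta_p]$ is false in general, and your proof as written does not close.

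The good news is that you do not need equality, only the containment you already proved: since $Y[1-\zeta_p]$ is killed by $1-\zeta_p$, the ring $\mathbb{Z}[\zeta_p]$ acts on it through $\mathbb{Z}[\zeta_p]/(1-\zeta_p)\simeq\mathbb{F}_p$, i.e.\ by integer scalars, so \emph{every} subgroup of $Y[1-\zeta_p]$ --- in particular $K(\Theta|_Y)$ --- is stable under all of $\mathrm{End}(Y)$, hence under $\mathrm{Aut}(Y)=\mathbb{Z}[\zeta_p]^\times$ (an injective endomorphism mapping a finite group into itself maps it onto itself). The paper's proof is even more direct and avoids the fixed-point locus altogether: $\zeta_p$ preserves the polarization $\Theta|_Y$ and hence its kernel, integers obviously preserve the subgroup, and every unit of $\mathbb{Z}[\zeta_p]$ is an integer polynomial in $\zeta_p$, so it sends $K(\Theta|_Y)$ to itself; combined with $\mathrm{Aut}(Y)=\mathbb{Z}[\zeta_p]^\times$ this gives the lemma. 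Either repair makes your argument correct, but as submitted the key identification is wrong.
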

\begin{proof}
Indeed, since $\zeta_p\in\mathrm{Aut}(Y,\Theta|_Y)$, we have that $\zeta_p\in G_{K(\Theta|_Y)}$. Now since every element of $\mathbb{Z}[\zeta_p]^\times$ is a polynomial in $\zeta_p$ with integer coefficients, and each integer and each power of $\zeta_p$ sends $K(\Theta|_Y)$ to itself, every element of $\mathbb{Z}[\zeta_p]^\times$ preserves $K(\Theta|_Y)$.
\end{proof}

Let $U_p^2$ be the group of squares of elements in $U_p$. We have that $U_p^2$ is a finite index subgroup of $U_p^+$.

\begin{proposition}\label{case =}
There is a bijection between $\mathrm{Aut}_+^s(Y^\vee)/G_{K(\Theta|_Y)}$ and $U_p^+/U_p^2$.
\end{proposition}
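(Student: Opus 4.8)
The plan is to pin down both the set $\mathrm{Aut}_+^s(Y^\vee)$ and the action of $G_{K(\Theta|_Y)}=\mathbb{Z}[\zeta_p]^\times$ on it explicitly in terms of the unit group of $\mathbb{Z}[\zeta_p]$, and then to recognize the orbit set as $U_p^+/U_p^2$.

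First I would transport everything from $Y^\vee$ back to $Y$. Conjugation by $\varphi_{\Theta|_Y}$ defines a ring isomorphism $\mathrm{End}_\mathbb{Q}(Y^\vee)\xrightarrow{\sim}\mathrm{End}_\mathbb{Q}(Y)$, $\sigma\mapsto\varphi_{\Theta|_Y}^{-1}\sigma\varphi_{\Theta|_Y}$, which on the elements $\sigma_\beta=\varphi_\beta\varphi_{\Theta|_Y}^{-1}$ agrees with dualization $\sigma_\beta\mapsto\sigma_\beta^\vee=f_\beta$, exactly as computed in the proof of Theorem~\ref{counttheorem}. As shown there, it sends honest automorphisms to honest automorphisms and preserves both total positivity (the characteristic polynomial is unchanged under conjugation by an isogeny) and symmetry, so it restricts to a bijection $\mathrm{Aut}_+^s(Y^\vee)\xrightarrow{\sim}\mathrm{Aut}_+^s(Y)$. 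By the discussion preceding the proposition, $\mathrm{Aut}_+^s(Y)$ is precisely $U_p^+$, the group of totally positive units of $\mathbb{Z}[\zeta_p+\zeta_p^{-1}]$.

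Next I would chase the $G_K$-action through this isomorphism. Using the formula $\mu:\sigma\mapsto\mu^\vee\sigma(\mu^\dagger)^\vee=\mu^\vee\sigma\varphi_{\Theta|_Y}\mu\varphi_{\Theta|_Y}^{-1}$ from the proof of Theorem~\ref{counttheorem}, a direct computation gives that the corresponding action on $\mathrm{Aut}_+^s(Y)$ is $\mu:f\mapsto\mu^\dagger f\mu$. Since $\zeta_p$ preserves $\Theta|_Y$, the Rosati involution is complex conjugation on $\mathrm{End}(Y)=\mathbb{Z}[\zeta_p]$, so $\mu^\dagger=\bar\mu$, and by commutativity the action is just $f\mapsto N(\mu)\,f$, where $N(\mu):=\mu\bar\mu=N_{\mathbb{Q}(\zeta_p)/\mathbb{Q}(\zeta_p+\zeta_p^{-1})}(\mu)$. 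The element $N(\mu)$ lies in $\mathbb{Z}[\zeta_p+\zeta_p^{-1}]$ and is a totally positive unit (in each complex embedding it equals $|\mu|^2>0$), so by the preceding lemma $G_K=\mathbb{Z}[\zeta_p]^\times$ acts on $U_p^+$ by translation through the subgroup $N(\mathbb{Z}[\zeta_p]^\times)\le U_p^+$. It then remains to identify this image. I claim $N(\mathbb{Z}[\zeta_p]^\times)=U_p^2$: the inclusion $\supseteq$ is clear since $N(u)=u^2$ for $u\in U_p$, and for $\subseteq$ one uses that, because $p$ is prime, the Hasse unit index of $\mathbb{Q}(\zeta_p)$ is $1$, so every unit can be written $\mu=\omega u$ with $\omega$ a root of unity and $u\in U_p$; then $\bar\mu=\omega^{-1}u$ and $N(\mu)=\mu\bar\mu=u^2\in U_p^2$. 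Hence the $G_K$-orbits on $U_p^+$ are exactly the cosets of $U_p^2$, which yields the bijection $\mathrm{Aut}_+^s(Y^\vee)/G_{K(\Theta|_Y)}\longleftrightarrow U_p^+/U_p^2$.

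The main obstacle is this last norm computation: everything else is formal manipulation of dual isogenies together with the commutativity of $\mathbb{Z}[\zeta_p]$, but identifying $N(\mathbb{Z}[\zeta_p]^\times)=U_p^2$ genuinely needs the input that the unit group of $\mathbb{Z}[\zeta_p]$ is generated by roots of unity together with the totally real units, i.e. that the $Q$-index equals $1$ for prime cyclotomic fields. I would also be careful that the isomorphism $\mathrm{Aut}_+^s(Y^\vee)\cong\mathrm{Aut}_+^s(Y)$ is compatible with the two $G_K$-actions, which is why I prefer the explicit conjugation-by-$\varphi_{\Theta|_Y}$ bookkeeping above rather than invoking any reformulated statement as a black box.
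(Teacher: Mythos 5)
Your proposal is correct and follows essentially the same route as the paper: identify $\mathrm{Aut}_+^s(Y^\vee)$ with $U_p^+$ via the polarization, observe that the $G_{K(\Theta|_Y)}=\mathbb{Z}[\zeta_p]^\times$-action becomes multiplication by $\mu\bar\mu$, and use that every unit of $\mathbb{Z}[\zeta_p]$ is a root of unity times a unit of $\mathbb{Z}[\zeta_p+\zeta_p^{-1}]$ to see the image is exactly $U_p^2$. You merely spell out the transport from $Y^\vee$ to $Y$ and the inclusion $N(\mathbb{Z}[\zeta_p]^\times)\supseteq U_p^2$ a bit more explicitly than the paper does.
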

\begin{proof}
By what has been established, there is a bijection between $\text{Aut}_+^s(Y)$ and $U_p^+$. Now if $f\in G_{K(\Theta|_Y)}=\mathbb{Z}[\zeta_p]^\times$ and $g\in U_p^+$, we have that
$$f^\dagger gf=\overline{f}fg=|f|^2g.$$
Now each element of $\mathbb{Z}[\zeta_p]^\times$ is of the form $\zeta_p^ju$ where $u\in\mathbb{Z}[\zeta_p+\zeta_p^{-1}]^\times$. Therefore there exists $j$ and $u\in\mathbb{Z}[\zeta_p+\zeta_p^{-1}]^\times$ such that
$$f=\zeta_p^ju,$$
and so $|f|^2=u^2\in U_p^2$.
\end{proof}

\begin{definition} We define $\mathfrak{u}(p):=|U_p^+/U_p^2|$; this is just the narrow class number of $\mathbb{Q}(\zeta_p)$ divided by its class number. 
\end{definition}

Summing everything up, we have proved the main theorem of our paper:

\begin{theorem}
Let $Z:=\mathcal{A}_g(p,\rho_n)$ be a component of $\mathrm{Sing}(\mathcal{A}_g)$ such that $n_i\neq1$ for some $i$, and such that $\sum_{i=1}^{(p-1)/2}n_in_{p-i}>0$. Then a very general element of $Z$ has exactly $\mathfrak{u}(p)$ principal polarizations.
\end{theorem}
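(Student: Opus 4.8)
The plan is to assemble the pieces already established in the previous sections; almost nothing new is needed. Fix a very general $(A,\Theta)\in Z=\mathcal{A}_g(p,\rho_n)$, let $\sigma$ be its order-$p$ automorphism, and let $X,Y\subseteq A$ be the abelian subvarieties on which $\sigma$ acts trivially, respectively with finitely many fixed points, as in Section \ref{moduli1}. The hypothesis $\sum_{i=1}^{(p-1)/2}n_in_{p-i}>0$ means $c>0$, so $Y$ is nontrivial and the $\rho$-equivariant isogeny $a:X\times Y\to A$ exists. First I would invoke Proposition \ref{moduli}: since the $X$-factor can ``move as much as it wants'' in moduli, being very general in $Z$ forces $X$ to be a very general $b$-dimensional abelian variety, and therefore $\mathrm{rk}\,\mathrm{NS}(X)=1$ and $\mathrm{Hom}(X,Y)=0$.

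Next, the (unlabeled) theorem opening Section 5 uses exactly these two conditions on $X$ to produce, for such a very general $(A,\Theta)$, a bijection between the set $\mathcal{P}$ of principal polarizations on $A$ modulo $\mathrm{Aut}(A)$ and $\mathrm{Aut}_+^s(Y^\vee)/G_{K(\Theta|_Y)}$. This already reduces the computation of $\pi(A)=\#\mathcal{P}$ to a quantity intrinsic to $(Y,\Theta|_Y)$, so it remains only to evaluate that quotient.

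Then I would feed in the endomorphism-ring computation from Section \ref{moduli2}: under the standing hypotheses $n_i\neq1$ for some $i$ and $\sum_{i=1}^{(p-1)/2}n_in_{p-i}>0$, the corollary there gives $\mathrm{End}_{\mathbb{Q}}(A)\in\{\mathbb{Q}(\zeta_p),\,\mathbb{Q}\times\mathbb{Q}(\zeta_p)\}$ for a very general element, hence $\mathrm{End}(Y)=\mathbb{Z}[\zeta_p]$. Consequently $\mathrm{Aut}(Y)=\mathbb{Z}[\zeta_p]^\times$, the Rosati involution of $\Theta|_Y$ (with respect to which $\zeta_p$ is an automorphism) acts as complex conjugation, the symmetric automorphisms form $U_p=\mathbb{Z}[\zeta_p+\zeta_p^{-1}]^\times$, and $G_{K(\Theta|_Y)}=\mathbb{Z}[\zeta_p]^\times$ by the lemma preceding Proposition \ref{case =}. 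That proposition identifies $\mathrm{Aut}_+^s(Y^\vee)/G_{K(\Theta|_Y)}$ with $U_p^+/U_p^2$, whose cardinality is $\mathfrak{u}(p)$ by definition. Combining this with the Section 5 bijection yields $\pi(A)=\mathfrak{u}(p)$, as claimed.

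The one point I would spell out with care — the rest being direct citations — is that all the genericity requirements genuinely hold on a single very general locus of $Z$: namely that ``very general in $Z$'' is strong enough to force simultaneously (i) $X$ very general, with $\mathrm{rk}\,\mathrm{NS}(X)=1$ and $\mathrm{Hom}(X,Y)=0$, and (ii) $\mathrm{End}(Y)=\mathbb{Z}[\zeta_p]$. This is where one must combine Proposition \ref{moduli} (the $X$-factor sweeping out all of $\mathcal{A}_b$, so its generic Néron--Severi and Hom behaviour is the generic one) with \cite[Theorem 9.9.1]{BL} together with Proposition \ref{importantprop} (the $Y$-factor attaining the minimal endomorphism ring $\mathbb{Z}[\zeta_p]$ on a dense open subset of $\mathcal{A}(\mathcal{M},T)$). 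Each is a nonempty Zariski-open condition on its moduli factor, so their joint failure locus is a countable union of proper closed subsets, and the very general element of $Z$ avoids it. I expect this bookkeeping — rather than any single computation — to be the only mildly delicate part of the argument.
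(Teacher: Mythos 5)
Your proposal is correct and follows essentially the same route as the paper: the paper's proof of this theorem is precisely the assembly of the Section 5 bijection $\mathcal{P}\leftrightarrow\mathrm{Aut}_+^s(Y^\vee)/G_{K(\Theta|_Y)}$, the computation $\mathrm{End}(Y)=\mathbb{Z}[\zeta_p]$ under the stated numerical hypotheses, the lemma $G_{K(\Theta|_Y)}=\mathbb{Z}[\zeta_p]^\times$, and Proposition \ref{case =} identifying the quotient with $U_p^+/U_p^2$ of cardinality $\mathfrak{u}(p)$. Your added care about the simultaneous genericity of the conditions on $X$ and $Y$ is a reasonable elaboration of what the paper treats implicitly through its convention on ``very general'' elements.
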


We note that the technical conditions are placed to assure us that the very general $Y$ that appears in the decomposition has endomorphism ring $\mathbb{Z}[\zeta_p]$.

\begin{proposition}
If $Z$ is as before, $(A,\Theta)\in Z$ is a very general element and $\Theta'$ is another principal polarization on $A$, then $(A,\Theta')\in Z$.
\end{proposition}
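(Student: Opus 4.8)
The plan is to produce, for any other principal polarization $\Theta'$ on $A$, an order-$p$ automorphism of $(A,\Theta')$ whose restriction to $A[n]$ is conjugate to $\rho_n$. The simplest candidate is $\sigma$ itself, the automorphism already witnessing $(A,\Theta)\in Z$: its restriction to $A[n]$ is $\sigma|_{A[n]}\sim\rho_n$ for free and $|\sigma|=p$, so the entire proof reduces to the single claim $\sigma^*\Theta'\equiv\Theta'$.

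To set this up I would first observe that $\sigma$ respects the decomposition $a:X\times Y\to A$: since $Y=(1_A-\sigma)A$, $X$ is the identity component of $\ker(1_A-\sigma)$, and $\sigma$ commutes with $1_A-\sigma$, we get $\sigma(X)=X$, $\sigma(Y)=Y$, with $\sigma|_X=\mathrm{id}_X$ and $\sigma|_Y$ equal to the primitive $p$-th root of unity $\zeta_p\in\mathbb{Z}[\zeta_p]=\mathrm{End}(Y)$. Next, from $\mathrm{Hom}(X,Y)=0$ (one of the very-general hypotheses) and the fact that $Y^\vee$ is isogenous to $Y$ one deduces $\mathrm{Hom}(X,Y^\vee)=0$, hence $\mathrm{NS}(X\times Y)=\mathrm{NS}(X)\oplus\mathrm{NS}(Y)$; so $a^*\Theta'$ is numerically the product class $p_X^*(\Theta'|_X)+p_Y^*(\Theta'|_Y)$, and likewise for $\Theta$. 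From the analysis preceding the statement we already know $\Theta'|_X\equiv\Theta|_X$ and $K(\Theta'|_Y)=X\cap Y=K(\Theta|_Y)$, so $\Theta'|_Y$ is an ample class on $Y$ of the same type $\tilde D$ as $\Theta|_Y$ with the same kernel.

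Now I would check that $\sigma$ preserves each factor. On $X$ this is immediate since $\sigma|_X=\mathrm{id}_X$. On $Y$, set $f:=\varphi_{\Theta|_Y}^{-1}\varphi_{\Theta'|_Y}$; exactly as in the proof of Theorem \ref{counttheorem}, the equality $K(\Theta'|_Y)=K(\Theta|_Y)$ forces $f\in\mathrm{End}(Y)=\mathbb{Z}[\zeta_p]$ (not merely $\mathrm{End}_\mathbb{Q}(Y)$). Since $\sigma|_Y$ preserves $\Theta|_Y$ we have $(\sigma|_Y)^\vee=\varphi_{\Theta|_Y}(\sigma|_Y)^{-1}\varphi_{\Theta|_Y}^{-1}$, so
\[\varphi_{(\sigma|_Y)^*(\Theta'|_Y)}=(\sigma|_Y)^\vee\varphi_{\Theta'|_Y}(\sigma|_Y)=\varphi_{\Theta|_Y}(\sigma|_Y)^{-1}f(\sigma|_Y).\]
Because $\mathrm{End}(Y)=\mathbb{Z}[\zeta_p]$ is commutative, $f$ and $\sigma|_Y$ commute and the right-hand side collapses to $\varphi_{\Theta|_Y}f=\varphi_{\Theta'|_Y}$, whence $(\sigma|_Y)^*(\Theta'|_Y)\equiv\Theta'|_Y$. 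Combining the two factors gives $(\sigma|_X\times\sigma|_Y)^*(a^*\Theta')\equiv a^*\Theta'$, i.e. $a^*(\sigma^*\Theta')\equiv a^*\Theta'$; since $a$ is an isogeny, $a^*$ is injective on $\mathrm{NS}_\mathbb{Q}$, so $\sigma^*\Theta'\equiv\Theta'$, and as these are integral classes this is an equality in $\mathrm{NS}(A)$. Thus $\sigma\in\mathrm{Aut}(A,\Theta')$ and $(A,\Theta')\in Z$.

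The one load-bearing step is the commutativity of $\mathrm{End}(Y)$: it is precisely the running hypotheses $n_i\neq1$ for some $i$ and $\sum_{i=1}^{(p-1)/2}n_in_{p-i}>0$ that guarantee $\mathrm{End}(Y)=\mathbb{Z}[\zeta_p]$ for a very general member, hence that $f$ commutes with $\sigma|_Y$. If $Y$ instead carried a non-commutative (quaternionic) endomorphism algebra, there would be no reason for the element $f$ attached to $\Theta'|_Y$ to commute with $\sigma|_Y$, and the argument would break down; everything else is routine bookkeeping with the isogeny $a$ and the identity $\varphi_{h^*\beta}=h^\vee\varphi_\beta h$.
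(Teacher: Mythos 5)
Your proof is correct and follows essentially the same route as the paper: the paper's own (much terser) argument is precisely that $\mathrm{End}(Y)=\mathbb{Z}[\zeta_p]$ is commutative, hence $\sigma$ fixes every principal polarization on $A$ and so witnesses $(A,\Theta')\in Z$. You simply make explicit, via the splitting over $X\times Y$ and the identity $\varphi_{h^*\beta}=h^\vee\varphi_\beta h$, the deduction the paper leaves implicit (and in fact commutativity of $\mathrm{End}_\mathbb{Q}(A)$ alone already gives $\sigma^*\Theta'\equiv\Theta'$ without needing $f$ integral or the restriction statements for $\Theta'|_X$).
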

\begin{proof}
Indeed, if $Y$ is the largest abelian subvariety of $A$ where the automorphism $\sigma$ acts with finitely many points as before, we have that $\mathrm{End}(Y)=\mathbb{Z}[\zeta_p]$ is abelian, and therefore $\sigma$ fixes any principal polarization on $A$. This implies that for any principal polarization $(A,\Theta')$, there exists an automorphism $\sigma\in\mathrm{Aut}(A,\Theta')$ such that $\sigma|_{A[n]}$ is conjugate to $\rho_n$, and therefore $(A,\Theta')\in\mathcal{A}_g(p,\rho_n)=Z$.
\end{proof}

\section{Examples}

We will end this article with several interesting examples.

\subsection{Examples in infinitely many genera}
Let $k$ be a subfield of $\mathbb{C}$ and let $f\in k[x]$ be of degree $n\geq 5$ and such that its Galois group over $k$ is either $S_n$ or $A_n$. Then the main result of \cite{Zarhin} says that the endomorphism algebra of the Jacobian of (the normalization of the projective curve associated to) $y^p=f(x)$ over $\mathbb{C}$ is $\mathbb{Z}[\zeta_p]$ where $\zeta_p$ is a primitive $p$th root of unity. 

In particular, \cite[Proposition 2.2]{GMZ} implies that the Jacobian described in the previous example lies in some irreducible component $\mathcal{A}_g(p,\rho_n)$ of the singular locus of $\mathcal{A}_g$ whose general element has endomorphism algebra $\mathbb{Z}[\zeta_p]$, for $g=(p-1)(n-1)/2$ if $p$ does not divide $n$ and $g=(p-1)(n-2)/2$ if it does. In this case the underlying abelian variety of each element of $\mathcal{A}_g(p,\rho_n)$ appears as the underlying abelian variety of exactly $\mathfrak{u}(p)$ points in $\mathcal{A}_g$.

As a matter of showing explicit numbers, consider $JC$, the Jacobian of the genus 620 curve 
\[C:y^{311}=x^5+2x+2.\]
By our previous analysis, we have that $JC$ has $\mathfrak{u}(311)=1024$ principal polarizations! Now $\mathbb{Z}/311\mathbb{Z}$ acts on $C$ with signature $(0;311^6)$, and so $JC$ moves in a $3$-dimensional family of Jacobians with the same group action, and the generic Jacobian in this family also has $1024$ principal polarizations. It is not at all clear if all of these polarizations are from Jacobians. By running the numbers, we get that 
\[1\leq i\leq 62\Rightarrow n_i=4\]
\[63\leq i\leq 124\Rightarrow n_i=3\]
\[125\leq i\leq 186\Rightarrow n_i=2\]
\[187\leq i\leq 248\Rightarrow n_i=1\]
\[249\leq i\leq 310\Rightarrow n_i=0\]
This implies that the component of $\mathrm{Sing}(\mathcal{A}_g)$ that $JC$ belongs to is of dimension
\[\sum_{i=1}^{155}n_in_{311-i}=3(124-62)+4(155-124)=310.\]

\subsection{Examples in dimension 2} In dimension 2, it is a somewhat classic problem to determine if a Jacobian is isomorphic to a product of two elliptic curves. See, for example, the work done by Hayashida and Nishi in \cite{HN}. More recently, Kani \cite{Kani} studied the locus of genus 2 curves in $\mathcal{M}_2$ whose Jacobian is isomorphic to the product of two elliptic curves, and showed that if $T(d)$ denotes the locus of curves whose Jacobian is isomorphic to a product of two elliptic curves that are connected by a cyclic isogeny of degree $d$, then $T(d)$, if non-empty, is a finite union of irreducible curves each of which is birational to a certain (quotient of a) modular curve. In \cite[Theorem 1]{Kani}, he states that if Gauss's Conjecture is true, then for $d\geq463$, $T(d)\neq\varnothing$. This is interesting in our context, since Lange \cite{Lange2} shows as a corollary to his main theorem that if $d\to\infty$, then the number of principal polarizations on a product of two elliptic curves connected by a (minimal) isogeny of degree $d$ goes to infinity. In particular, since a product of two elliptic curves has a non-trivial automorphism group (that fixes the product polarization), we get the following:

\begin{proposition}
For $g=2$, if Gauss's Conjecture is true, then given $N\in\mathbb{N}$, there exists a $2$-dimensional Jacobian $J$ such that $\pi(J)\geq N$. Moreover, $J$ can be taken such that not all of the principal polarizations it has make it into a Jacobian, and some of the polarizations have a non-trivial automorphism group.
\end{proposition}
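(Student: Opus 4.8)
The plan is to bolt together the two results recalled just above. Fix $N\in\mathbb{N}$. By \cite[Theorem~1]{Kani}, Gauss's Conjecture implies that $T(d)\neq\varnothing$ for every integer $d\geq 463$; in particular there are arbitrarily large such $d$. For any one of them, pick $C=C_d\in T(d)$: this is a smooth projective curve of genus $2$ whose Jacobian, as an \emph{unpolarized} abelian surface, satisfies $JC\cong E_1\times E_2$ with $E_1,E_2$ elliptic curves joined by a cyclic isogeny of degree $d$. By the corollary to Lange's main theorem in \cite{Lange2} quoted above, the number of principal polarizations of such a product goes to infinity as the (minimal) isogeny degree grows; so, choosing $d\geq 463$ large enough, we get $\pi(E_1\times E_2)\geq N$. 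Since $\pi$ is an invariant of the underlying abelian variety, $J:=JC_d$ satisfies $\pi(J)=\pi(E_1\times E_2)\geq N$, which is the first assertion.

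For the second assertion, observe that $J$ carries (at least) two genuinely different principal polarizations: the Jacobian polarization $\Theta$ of $C_d$, whose associated theta divisor is irreducible because $C_d$ is a smooth irreducible curve of genus $2$, so $(J,\Theta)$ is indecomposable; and the product polarization $\Theta_1\boxtimes\Theta_2$ inherited from a splitting $J\cong E_1\times E_2$, which is by construction decomposable. Decomposability of a principally polarized abelian surface is invariant under the action of $\mathrm{Aut}(J)$, so $[\Theta]$ and $[\Theta_1\boxtimes\Theta_2]$ are distinct modulo the action of $\mathrm{Aut}(J)$; moreover, by the classification of principally polarized abelian surfaces, a decomposable one is a product of two elliptic curves and is never the Jacobian of a smooth curve. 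Hence not all of the principal polarizations of $J$ make it into a Jacobian. Finally, the automorphism group of the polarized variety $(J,\Theta_1\boxtimes\Theta_2)$ contains $\mathrm{Aut}(E_1)\times\mathrm{Aut}(E_2)\supseteq\{\pm1\}\times\{\pm1\}$, a group of order $\geq 4$ which therefore properly contains $\{\pm\mathrm{id}_J\}$ (and if $E_1\cong E_2$ one even picks up the swap); so some of the principal polarizations of $J$ carry a non-trivial automorphism group.

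The argument is essentially bookkeeping on top of \cite{Kani} and \cite{Lange2}, so there is no real conceptual obstacle; the one point that should be checked carefully is the compatibility of hypotheses between the two inputs, namely that the degree-$d$ cyclic isogeny supplied by Kani's construction is minimal among isogenies $E_1\to E_2$ (or can be arranged to be so for a suitably generic member of $T(d)$), since this minimality is what Lange's asymptotic requires, together with the harmless remark that $\pi$ depends only on the abelian variety and not on the chosen polarization.
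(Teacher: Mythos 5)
Your proposal is correct and follows essentially the same route as the paper, which presents this proposition as an immediate consequence of the quoted results of Kani and Lange, with no separate proof beyond the surrounding discussion. Your explicit checks of the last two claims (indecomposability of the theta divisor versus decomposability of the product polarization, and the automorphisms $(\pm1,\pm1)$ of the product polarization), as well as your caveat about minimality of the cyclic isogeny needed for Lange's asymptotic, are in fact more detail than the paper itself supplies.
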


This shows that, although in this article we studied \textit{generic} principally polarized abelian varieties with non-trivial automorphisms, if we look at specific cases then we can find very strange behavior.

\end{document}